\documentclass[11pt,reqno]{amsart}

\usepackage{amsmath,amssymb,amsthm,amsfonts}
\usepackage{cite}
\usepackage{xcolor}
\allowdisplaybreaks

\theoremstyle{plain}
\newtheorem{thm}{Theorem}[section]
\newtheorem{defn}[thm]{Definition}
\newtheorem{lemma}[thm]{Lemma}
\newtheorem{proposition}[thm]{Proposition}
\newtheorem{remark}[thm]{Remark}
\newtheorem{corollary}[thm]{Corollary}
\newtheorem{theorem}[thm]{Theorem}

\usepackage{mathrsfs}

\numberwithin{equation}{section}
\subjclass[2020]{35Q30, 35Q35, 35B40, 76D05, 76R50}
\keywords{Decay rates; 3D Boussinesq equations; Navier boundary conditions;  Weak solutions}

\newcommand{\R}{\mathbb{R}}
\newcommand{\nn}{{n}}
\newcommand{\tanproj}{\text{tan}}
\newcommand{\Ltwosigma}{L^2_{\sigma,\tan}(\Omega)}
\newcommand{\Honesigma}{H^{1}_{\sigma,\tan}(\Omega)}
\newcommand{\boussop}{\mathbb{B}} 

\newcommand{\navierbc}{\left[(2S({u}))\nn + \alpha {u}\right]_{\tanproj} = 0}
\DeclareMathOperator{\kernel}{Ker}
\DeclareMathOperator{\Proj}{Proj}
\title[Decay of the 3D Boussinesq system]{Exponential decay for the 3D Boussinesq equations with Navier boundary conditions}
\author[Wen Feng, Weinan Wang and Xiang Xu]{Wen Feng$^{1}$, Weinan Wang$^{2}$ and  Xiang Xu$^{3}$}
	
	\address{$^1$ Department of Mathematics and Statistics, Sam Houston State University, Huntsville,  Texas, 77340,  United States}
	
	\email{wxf008@shsu.edu}
	
	\address{$^2$ Department of Mathematics, University of Oklahoma, Norman, OK 
		73019, United States}
	
	\email{ww@ou.edu}
	
	\address{$^3$ Department of Mathematics, University of Oklahoma, Norman, OK 
		73019, United States}
	
	\email{xiang.xu-1@ou.edu}

\date{\today}

\begin{document}
\begin{abstract}
We study the three-dimensional incompressible Boussinesq equations on a bounded
domain with smooth boundary and Navier boundary conditions. We construct global weak
solutions by a Galerkin approximation and establish the associated Leray--Hopf
energy inequalities. For nonnegative boundary friction, the total energy
decays exponentially when the friction coefficient is positive on a
boundary subset of positive surface measure. In the frictionless case, the
scalar field and the velocity component orthogonal to the rigid-motion kernel
decay exponentially; when the kernel is trivial, this is exponential decay of
the total energy. 
When the scalar initial datum vanishes, this also proves exponential decay
for the Navier--Stokes system on solids of revolution for every friction
coefficient $\alpha\in L^\infty(\partial\Omega)$ such that $\alpha\ge0$ almost
everywhere and $\alpha\not\equiv0$, resolving the
corresponding case left open in \cite{Kelliher2025}. The proof uses a weighted Korn--Poincar\'e
inequality and a two-time Gronwall-type inequality with an exponentially
decaying forcing term.
\end{abstract}
\maketitle

\section{Introduction}

The long-time behavior of solutions is a fundamental question in the analysis of viscous incompressible fluids. For the Navier--Stokes equations (NSE) on a bounded domain with the classical no-slip condition $(u=0$ on $\partial\Omega)$, exponential decay to the rest state follows from the energy inequality and the Poincar\'e inequality. The zero trace excludes undamped rigid motions and controls the kinetic energy by the viscous dissipation, yielding a uniform decay rate.

The Boussinesq system couples the NSE with a transport--diffusion equation for density or temperature. It is widely used in the analysis of atmospheric and oceanic circulation, as well as heat transfer problems. The literature includes global regularity and well-posedness results for partially dissipative and fractional models \cite{KW2,W1,stefanov1,stefanov2,WJ3,WJ4}; stability, instability, decay, and long-time behavior near equilibria \cite{D1,D2,KW1,WJ2,WJ6,WJ7,WJ8}; and bounded-domain or boundary-condition problems \cite{di,ragusa,W3,WJ5,HuEtAl2018,BleitnerCarlsonNobili2025}. {Related Rayleigh--Taylor instability for an inhomogeneous incompressible geophysical-fluid model is studied in \cite{han1}.} Related mild-solution theories on non-Euclidean spaces appear in \cite{pham1,xuan}, while norm-inflation phenomena are studied in \cite{W2}. Navier proposed the classical slip law in a memoir presented to the Acad\'emie Royale des Sciences in 1822 and later published in volume~6 of its \emph{M\'emoires} \cite{navier}. Such conditions allow the fluid to slip at the boundary, with tangential velocity proportional to tangential stress. They are used to model, for example, flow over rough surfaces, microfluidic flows, and motion near contact lines. Mathematically, they create a subtle dependence of the decay properties on the geometry of the domain and the friction coefficient $\alpha$.

In a recent paper \cite{Kelliher2025}, the authors gave a comprehensive analysis of the three-dimensional Navier--Stokes equations with Navier boundary conditions in smooth bounded domains, with continuous nonnegative friction. They proved exponential decay when the friction is positive at every boundary point, and also when the friction is nonnegative and the kernel of the symmetric-gradient operator is trivial. In the frictionless case with nontrivial kernel, they proved convergence to a rigid rotation. Here and throughout the paper, we denote the symmetric gradient by
\[
S(u):=\frac{\nabla u+(\nabla u)^T}{2},
\]
and define
\[
\kernel S
:=
\left\{
w\in H^1_{\sigma,\tan}(\Omega):S(w)=0
\right\}.
\]
Thus, $\kernel S$ consists of the rigid motions that are tangent to
$\partial\Omega$.

Here $H^1_{\sigma,\tan}(\Omega)$ denotes the space of vector fields in
$H^1(\Omega;\mathbb R^3)$ that are divergence-free in $\Omega$ and have
vanishing normal trace on $\partial\Omega$.

In this paper, we extend this theory to the Boussinesq system. At the $L^2$-energy level, incompressibility and the impermeability boundary condition eliminate the transport contribution. Consequently, the scalar $L^2$-norm decays independently of the velocity estimate, while the buoyancy term $\rho e_3$ acts as an exponentially decaying forcing term in the velocity equation.

Our weighted Korn--Poincar\'e inequality controls the full velocity whenever the nonnegative friction coefficient is positive on a boundary subset of positive surface measure, without requiring triviality of $\kernel S$ or an additional geometric restriction on the domain. This strictly weakens the positive-friction hypothesis in \cite{Kelliher2025}; it also lowers the regularity required of the friction coefficient from continuity to essential boundedness. With zero scalar initial data, it yields a Navier--Stokes decay result that resolves the nontrivial-friction portion of the solid-of-revolution case identified as open in Section~8 of \cite{Kelliher2025}. Let
\[
A_{\partial\Omega}:=\{x\in\partial\Omega:\alpha(x)>0\}.
\]
Here and below, $\mathcal H^2(A_{\partial\Omega})>0$ means positive surface measure. This condition is independent of the representative of $\alpha\in L^\infty(\partial\Omega)$. Since $\alpha\ge0$, either $\alpha=0$ almost everywhere or $\mathcal H^2(A_{\partial\Omega})>0$. Thus Sections~\ref{sec5} and~\ref{sec6} cover every nonnegative friction coefficient in $L^\infty(\partial\Omega)$, and no smallness assumption is imposed on the initial data. Related work established local existence of strong solutions and global stability of large strong solutions under a suitable integral condition \cite{W3}. In Appendix~\ref{appx1}, we prove a two-time Gronwall-type inequality inspired by \cite[Appendix A]{Kelliher2025}. No sign condition on the unknown function is needed, because the proof passes through a shifted quantity that need not have a fixed sign.

The remainder of the work is organized as follows. In Section~\ref{sec2}, we introduce the necessary background. In Section~\ref{sec3}, we construct global weak solutions by a Galerkin approximation adapted to the Navier boundary condition. In Section~\ref{sec4}, we derive the corresponding Leray--Hopf energy inequalities and the weighted Korn--Poincar\'e inequality. Appendix~\ref{appx1} contains the two-time Gronwall inequality used in the decay argument. In Section~\ref{sec5}, we prove exponential decay of the total energy when $\mathcal H^2(A_{\partial\Omega})>0$. In Section~\ref{sec6}, we treat the frictionless case and prove exponential decay of the scalar variable together with the velocity component orthogonal to $\kernel S$; when the kernel is trivial, this gives decay of the full energy.

\subsection{The Boussinesq System with Navier Boundary Conditions}

Let $\Omega \subset \R^3$ be a bounded, connected domain with smooth boundary $\partial\Omega$. Let $\nu, \kappa > 0$ be the viscosity and thermal diffusivity, and let $e_3=(0,0,1)$. For an arbitrary finite $T>0$, we consider the system:
\begin{align}
    \partial_t {u} - \nu \Delta {u} + ({u} \cdot \nabla) {u} + \nabla p &= \rho {e}_3 \quad \text{in } (0, T) \times \Omega, \label{eq:bouss_momentum} \\
    \partial_t \rho - \kappa \Delta \rho + {u} \cdot \nabla \rho &= 0 \quad \text{in } (0, T) \times \Omega, \label{eq:bouss_density} \\
    \nabla \cdot {u} &= 0 \quad \text{in } (0, T) \times \Omega, \label{eq:bouss_div}
\end{align}
with the initial conditions
\begin{align*}
    {u}(0, \cdot) = {u}_0 \quad \text{and} \quad \rho(0, \cdot) = \rho_0,
\end{align*}
and the Navier boundary conditions
\begin{align}
    {u} \cdot \mathbf n = 0, \quad \navierbc, \quad \text{and} \quad \rho = 0 \quad \text{on } (0, T) \times \partial\Omega. \label{eq:navier_bc}
\end{align}

Above, $\alpha\in L^\infty(\partial\Omega)$ is a nonnegative friction coefficient, $\mathbf n$ is the outward unit normal to $\partial\Omega$, and the subscript ``tan'' denotes orthogonal projection onto the tangent space. The homogeneous Dirichlet condition on $\rho$ is essential for decay to zero; under a homogeneous Neumann condition, one must additionally impose zero mean or subtract the conserved mean.

\section{Preliminaries}\label{sec2}

We begin by defining the spaces used in the weak formulation:
\[
\begin{aligned}
C^\infty_{\sigma,\tan}(\overline\Omega)
&:=\{u\in C^\infty(\overline\Omega;\mathbb R^3):
\nabla\cdot u=0,\ u\cdot n=0\ \text{on }\partial\Omega\},\\
{L^2_{\sigma,\tan}(\Omega)}
&{:=\overline{C^\infty_{\sigma,\tan}(\overline\Omega)}^{\,L^2(\Omega;\mathbb R^3)},}\\
H^1_{\sigma,\tan}(\Omega)&:=H^1(\Omega;\mathbb R^3)\cap L^2_{\sigma,\tan}(\Omega).
\end{aligned}
\]
{If \(u_n\in C^\infty_{\sigma,\tan}(\overline\Omega)\)
and \(u_n\to u\) in \(L^2(\Omega;\mathbb R^3)\), then
\(\nabla\cdot u=0\) in \(\mathcal D'(\Omega)\). 
Here, \(\mathcal D'(\Omega)\) denotes the space of distributions on \(\Omega\).
We use
\[
H(\operatorname{div};\Omega)
:=\{v\in L^2(\Omega;\mathbb R^3):\nabla\cdot v\in L^2(\Omega)\},
\]
endowed with its graph norm.
Moreover,
\(u_n\to u\) in \(H(\operatorname{div};\Omega)\) because
\(\nabla\cdot u_n=\nabla\cdot u=0\). Continuity of the weak normal-trace
map from \(H(\operatorname{div};\Omega)\) to
\(H^{-1/2}(\partial\Omega)\) therefore gives \(u\cdot n=0\) on
\(\partial\Omega\) in the weak normal-trace sense.}
{Since
\(C^\infty_{\sigma,\tan}(\overline\Omega)
\subset H^1_{\sigma,\tan}(\Omega)
\subset L^2_{\sigma,\tan}(\Omega)\),}
\[
{
\overline{H^1_{\sigma,\tan}(\Omega)}^{\,L^2(\Omega;\mathbb R^3)}
=
L^2_{\sigma,\tan}(\Omega).}
\]
Thus the form domain is dense in the phase space used in
the spectral construction below. Here ``form domain'' refers to
\(H^1_{\sigma,\tan}(\Omega)\), the domain of the velocity bilinear form,
whereas ``phase space'' refers to \(L^2_{\sigma,\tan}(\Omega)\).
Let
\[
\mathcal{H}:=\Ltwosigma\times L^2(\Omega), \quad \mathcal{V}:=\Honesigma\times H^1_0(\Omega).
\]
We denote by $\mathcal V'$ the continuous dual of $\mathcal V$; with
$\mathcal H$ identified with its dual, we use the Gelfand triple
\[
\mathcal V\hookrightarrow\mathcal H\cong\mathcal H'\hookrightarrow\mathcal V'.
\]
They are endowed with the natural inner products
\[
\langle (u,\rho),(v,\phi) \rangle_{\mathcal{H}}=\int_\Omega u\cdot {v}\,dx+\int_\Omega\rho\phi\,dx,
\]
\[
\langle({u},\rho),({v},\phi)\rangle_{\mathcal{V}}=\int_\Omega\nabla u : \nabla {v}\,dx+\int_\Omega\nabla\rho\cdot\nabla \phi\,dx.
\]
For $u\in H^1_{\sigma,\tan}(\Omega)$, each component has zero mean, since
\[
\int_\Omega u_j\,dx
=\int_\Omega u\cdot\nabla x_j\,dx
=\int_{\partial\Omega}x_j u\cdot n\,dS
-\int_\Omega x_j\nabla\cdot u\,dx
=0.
\]
The Poincar\'e inequality therefore gives
\[
\|u\|_{L^2(\Omega)}\le C_\Omega\|\nabla u\|_{L^2(\Omega)},
\]
so the gradient norm is equivalent to the usual $H^1$ norm on $H^1_{\sigma,\tan}(\Omega)$. We next verify the density of $C^\infty_{\sigma,\tan}(\overline\Omega)$ in $H^1_{\sigma,\tan}(\Omega)$. Let $u\in H^1_{\sigma,\tan}(\Omega)$. By the characterization above,
$\nabla\cdot u=0$ in $\Omega$ and $u\cdot n=0$ on $\partial\Omega$ in the
normal-trace sense. Choose $\widetilde u_n\in C^\infty(\overline\Omega;\mathbb R^3)$ with $\widetilde u_n\to u$ in $H^1$. Continuity of the divergence and trace maps gives
\[
\nabla\cdot\widetilde u_n\to0\quad\text{in }L^2(\Omega),
\qquad
\widetilde u_n\cdot n\to0\quad\text{in }H^{1/2}(\partial\Omega).
\]
The divergence theorem gives the compatibility condition for the Neumann problems
\[
\Delta\psi_n=\nabla\cdot\widetilde u_n\quad\text{in }\Omega,
\qquad
\partial_n\psi_n=\widetilde u_n\cdot n\quad\text{on }\partial\Omega,
\qquad
\int_\Omega\psi_n\,dx=0.
\]
Elliptic regularity gives $\psi_n\in C^\infty(\overline\Omega)$, and we set $u_n:=\widetilde u_n-\nabla\psi_n$. The Neumann estimate
\[
\|\psi_n\|_{H^2(\Omega)}
\le C\left(
\|\nabla\cdot\widetilde u_n\|_{L^2(\Omega)}
+\|\widetilde u_n\cdot n\|_{H^{1/2}(\partial\Omega)}
\right)
\longrightarrow0
\]
gives $u_n\to u$ in $H^1$. The defining equations for $\psi_n$ give
\[
\nabla\cdot u_n=\nabla\cdot\widetilde u_n-\Delta\psi_n=0,
\qquad
u_n\cdot n=\widetilde u_n\cdot n-\partial_n\psi_n=0.
\]
Thus $u_n\in C^\infty_{\sigma,\tan}(\overline\Omega)$ and $u_n\to u$ in
$H^1(\Omega;\mathbb R^3)$, which proves the asserted density.

The following geometric boundary identity from \cite{Kelliher2025}
uses $d\boldsymbol n_x:T_x(\partial\Omega)\to T_x(\partial\Omega)$, the
differential of the outward unit normal (the Gauss map) at $x$; with our sign
convention, $-d\boldsymbol n_x$ is the shape operator.

\begin{lemma}[\cite{Kelliher2025}, Lemma 2.1] \label{lem:boundary_identity}
Let $u, v \in C^\infty_{\sigma,\tan}(\overline{\Omega})$. If $u$ satisfies $\navierbc$, then on $\partial\Omega$ we have
\[
v\cdot(\nabla(u)\nn)=v\cdot [d\nn(u)-\alpha u].
\]
\end{lemma}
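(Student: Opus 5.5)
We prove the identity pointwise at a fixed $x\in\partial\Omega$, splitting $\nabla u\,\nn$ into its normal and tangential parts. Since $v\cdot\nn=0$ on $\partial\Omega$, only the tangential part of $(\nabla u)\nn$ contributes, so it suffices to show
\[
[(\nabla u)\nn]_{\tanproj}=d\nn(u)-\alpha u
\qquad\text{on }\partial\Omega .
\]
Here $(\nabla u)\nn$ denotes $(\nn\cdot\nabla)u$, with $(\nabla u)_{ij}=\partial_j u_i$. With this convention, $2S(u)\nn=(\nabla u)\nn+(\nabla u)^T\nn$. The Navier condition $\navierbc$ then gives
\[
[(\nabla u)\nn]_{\tanproj}=-[(\nabla u)^T\nn]_{\tanproj}-\alpha u,
\]
where we used $u_{\tanproj}=u$ because $u\cdot\nn=0$. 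The task therefore reduces to identifying $[(\nabla u)^T\nn]_{\tanproj}$ with $-d\nn(u)$.

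Extend $\nn$ smoothly to a neighborhood of $\partial\Omega$, for instance as the gradient of the signed distance function. The boundary condition says $u\cdot\nn=0$ on $\partial\Omega$. Differentiate this along a tangent direction $\tau\in T_x(\partial\Omega)$:
\[
0=\tau\cdot\nabla(u\cdot\nn)=\tau\cdot\big((\nabla u)^T\nn\big)+u\cdot\big((\nabla\nn)\tau\big).
\]
Since $(\nabla\nn)\tau=d\nn_x(\tau)$ and the shape operator $d\nn_x$ is self-adjoint on $T_x(\partial\Omega)$, the last term equals $\tau\cdot d\nn_x(u)$. This uses that $u(x)$ is tangent. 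Because $\tau$ is an arbitrary tangent vector, we obtain $[(\nabla u)^T\nn]_{\tanproj}=-d\nn(u)$.

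Substituting this into the previous display gives $[(\nabla u)\nn]_{\tanproj}=d\nn(u)-\alpha u$. Dotting with the tangent field $v$ yields the lemma.

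The main point requiring care is the symmetry of $d\nn$, together with the fact that tangential differentiation of $u\cdot\nn$ is legitimate for smooth $u$. The symmetry follows from $\nn=\nabla d$ for the signed distance $d$, so that $\nabla\nn$ is a Hessian and hence symmetric. We also need that its restriction to tangent vectors maps into $T_x(\partial\Omega)$, which holds because $|\nn|=1$ implies $(\nabla\nn)^T\nn=0$. A further point is sign bookkeeping: the convention $(\nabla u)\nn=(\nn\cdot\nabla)u$ must match the one used in the paper. If the paper uses the transposed convention, the same argument applies with the roles of the two halves of $2S(u)\nn$ exchanged.
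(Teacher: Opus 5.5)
The paper gives no proof of its own (it only cites Lemma~2.1 of the Kelliher reference), and your argument is a correct, complete proof along the standard lines: differentiate $u\cdot\boldsymbol n=0$ tangentially, use the symmetry of $d\boldsymbol n$ to get $[(\nabla u)^T\boldsymbol n]_{\tanproj}=-d\boldsymbol n(u)$, then read off $[(\nabla u)\boldsymbol n]_{\tanproj}=d\boldsymbol n(u)-\alpha u$ from the Navier condition.

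The one thing to fix is your closing hedge: the convention $(\nabla u)\boldsymbol n=(\boldsymbol n\cdot\nabla)u$ is not optional. Under the transposed convention, your own computation shows that $(\nabla u)\boldsymbol n$ would have tangential part $-d\boldsymbol n(u)$, so the stated identity would fail in general. The Jacobian convention is also the one the paper needs when it identifies $\mathscr B$ with $-\nu\int_\Omega v\cdot\Delta u\,dx-\kappa\int_\Omega\phi\Delta\rho\,dx$, because integrating by parts produces the boundary term $\int_{\partial\Omega}v\cdot(\boldsymbol n\cdot\nabla)u\,dS$.
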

The weak formulation is defined directly through the bilinear form and
therefore does not require a pointwise interpretation of the Navier condition
for $\alpha\in L^\infty(\partial\Omega)$. For each $x\in\partial\Omega$, the
map $d\boldsymbol n_x$ is self-adjoint on the tangent space, and the
eigenvalues of $-d\boldsymbol n_x$ are the principal curvatures $k_1(x)$ and
$k_2(x)$. We set
\[
\Lambda_\Omega:=\sup_{x\in\partial\Omega}\max\{|k_1(x)|,|k_2(x)|\}.
\]

The associated dissipation form $\mathscr{B}: \mathcal{V}\times\mathcal{V}\to\R$ is defined by
\begin{align*}
\mathscr{B}((u,\rho),(v,\phi)):=&\nu \int_\Omega\nabla u:\nabla v\,dx+\kappa\int_\Omega\nabla\rho\cdot\nabla\phi\,dx \\
&+\nu\int_{\partial\Omega}\alpha\,u_{\tanproj} \cdot{v}_{\tanproj}\,dS-\nu\int_{\partial\Omega}v\cdot d\nn(u)\,dS.
\end{align*}
Using Lemma~\ref{lem:boundary_identity}, an integration by parts shows that for smooth functions satisfying the boundary conditions, this form is equivalent to
\[
\mathscr{B}(({u},\rho),({v},\phi))=-\nu \int_\Omega v\cdot\Delta u\,dx-\kappa\int_\Omega\phi\Delta\rho\,dx.
\]

We now define the Boussinesq-Stokes operator $\boussop:\mathcal{V}\to\mathcal{V}'$ by
\[
\langle\boussop(u,\rho),(v,\phi)\rangle:=\mathscr{B}((u,\rho),(v,\phi)).
\]

\begin{proposition}[Properties of $\mathscr{B}$ and $\boussop$] \label{prop:bouss_op_props}
The bilinear form $\mathscr{B}$ is bounded and symmetric. Moreover, there exist constants $C_1,C_2>0$ and $\beta_0\ge0$ such that, for all $(u,\rho),(v,\phi) \in \mathcal{V}$:
\begin{align*}
|\mathscr{B}((u,\rho),(v,\phi))| &\leq C_1 \| (u,\rho) \|_{\mathcal{V}} \| (v, \phi) \|_{\mathcal{V}}, \\
\mathscr{B}((u,\rho),(v,\phi)) &= \mathscr{B}((v,\phi),(u,\rho)), \\
\mathscr{B}((u,\rho),(u,\rho)) &\geq C_2 \| (u,\rho) \|_{\mathcal{V}}^2 - \beta_0 \| (u,\rho) \|_{\mathcal{H}}^2.
\end{align*}
For any $\beta>\beta_0$, define
\[
\langle\widetilde{\boussop}_\beta U,V\rangle
:=\mathscr B(U,V)+\beta(U,V)_{\mathcal H}.
\]
Then $\widetilde{\boussop}_\beta$ is an isomorphism from $\mathcal{V}$ to $\mathcal{V}'$.
\end{proposition}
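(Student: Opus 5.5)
The plan is to handle the four claims in turn: boundedness, symmetry, the G\aa rding-type lower bound, and then the isomorphism via Lax--Milgram. The interior terms of $\mathscr B$ pose no difficulty, so all the work is in the two boundary integrals. These I would control using the trace theorem together with an interpolated trace inequality.

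\emph{Boundedness.} The interior terms are bounded by Cauchy--Schwarz, with constant $\max\{\nu,\kappa\}$. For the boundary terms I use $\alpha\in L^\infty(\partial\Omega)$ and $|d\mathbf n_x(w)|\le\Lambda_\Omega|w|$ for tangent $w$. Note that $u$ is tangent on $\partial\Omega$ since $u\cdot\mathbf n=0$ there, so $d\mathbf n(u)$ makes sense. This gives
\[
\Big|\nu\int_{\partial\Omega}\alpha u_{\tan}\cdot v_{\tan}\,dS-\nu\int_{\partial\Omega}v\cdot d\mathbf n(u)\,dS\Big|\le \nu(\|\alpha\|_{L^\infty}+\Lambda_\Omega)\|u\|_{L^2(\partial\Omega)}\|v\|_{L^2(\partial\Omega)}.
\]
The trace theorem $H^1(\Omega)\to L^2(\partial\Omega)$ and the Poincar\'e inequality on $H^1_{\sigma,\tan}(\Omega)$ from Section~\ref{sec2} bound this by $C\|\nabla u\|_{L^2}\|\nabla v\|_{L^2}$. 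On $H^1_0$, Poincar\'e makes $\|\nabla\rho\|_{L^2}$ a norm. Together these give the constant $C_1$.

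\emph{Symmetry.} The only nontrivial point is the curvature term. Since $d\mathbf n_x$ is self-adjoint on $T_x(\partial\Omega)$ and both $u$ and $v$ are tangent a.e.\ on $\partial\Omega$, we have $v\cdot d\mathbf n(u)=u\cdot d\mathbf n(v)$ pointwise. All other terms are manifestly symmetric.

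\emph{G\aa rding inequality.} This is the main step. I drop the friction term because it is nonnegative ($\alpha\ge0$). It then remains to bound
\[
\Big|\nu\int_{\partial\Omega}u\cdot d\mathbf n(u)\,dS\Big|\le\nu\Lambda_\Omega\|u\|^2_{L^2(\partial\Omega)}.
\]
I would use the interpolation trace inequality $\|u\|_{L^2(\partial\Omega)}^2\le C\|u\|_{L^2}\|u\|_{H^1}$. This follows from the divergence theorem applied to $|u|^2N$, where $N$ is a smooth extension of $\mathbf n$, which gives $\int_{\partial\Omega}|u|^2\,dS\le C(\|u\|_{L^2}^2+\|u\|_{L^2}\|\nabla u\|_{L^2})$. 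Young's inequality then yields $\le\varepsilon\|\nabla u\|_{L^2}^2+C_\varepsilon\|u\|_{L^2}^2$. Choosing $\varepsilon=\tfrac12$ gives
\[
\mathscr B((u,\rho),(u,\rho))\ge\tfrac{\nu}{2}\|\nabla u\|^2+\kappa\|\nabla\rho\|^2-\beta_0\|u\|^2_{L^2},
\]
so one may take $C_2=\min\{\nu/2,\kappa\}$ and $\beta_0=\nu\Lambda_\Omega C_{1/2}$.

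\emph{Isomorphism.} For $\beta>\beta_0$, the form $\mathscr B(U,V)+\beta(U,V)_{\mathcal H}$ is bounded on $\mathcal V$, because $\|\cdot\|_{\mathcal H}\le C\|\cdot\|_{\mathcal V}$ by Poincar\'e. It is also coercive, with lower bound $C_2\|U\|_{\mathcal V}^2+(\beta-\beta_0)\|U\|^2_{\mathcal H}\ge C_2\|U\|^2_{\mathcal V}$. The Lax--Milgram theorem then shows that $\widetilde{\boussop}_\beta:\mathcal V\to\mathcal V'$ is a bijection with bounded inverse, i.e.\ an isomorphism.
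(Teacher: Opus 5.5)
Your proposal is correct and follows the paper's proof essentially step for step: trace bounds with $\alpha\in L^\infty(\partial\Omega)$ and $|d\mathbf n(w)|\le\Lambda_\Omega|w|$ give boundedness, self-adjointness of $d\mathbf n_x$ gives symmetry, trace interpolation plus Young gives the G{\aa}rding bound, and Lax--Milgram gives the isomorphism; you are also more careful than the paper in using Poincar\'e to pass between the gradient seminorm and the full $H^1$ norm. One bookkeeping slip: taking $\varepsilon=\tfrac12$ in $\|u\|_{L^2(\partial\Omega)}^2\le\varepsilon\|\nabla u\|_{L^2}^2+C_\varepsilon\|u\|_{L^2}^2$ only gives $\nu(1-\Lambda_\Omega/2)\|\nabla u\|_{L^2}^2$, so to reach $\tfrac{\nu}{2}$ you should take $\varepsilon=1/(2\Lambda_\Omega)$ and $\beta_0=\nu\Lambda_\Omega C_{1/(2\Lambda_\Omega)}$.
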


\begin{proof}
Boundedness follows from the trace theorem, $\alpha\in L^\infty(\partial\Omega)$, and $\|dn\|_{L^\infty(\partial\Omega)}\le\Lambda_\Omega$. Symmetry follows from self-adjointness of the shape operator on each tangent space. Finally, by the trace interpolation inequality and Young's inequality, we have 
\[
\mathscr B((u,\rho),(u,\rho))
\ge \frac{\nu}{2}\|\nabla u\|_{L^2}^2+\kappa\|\nabla\rho\|_{L^2}^2-C\|u\|_{L^2}^2.
\]
Thus the stated estimate holds with $C_2=\min\{\nu/2,\kappa\}$ and $\beta_0=C$. For any $\beta>\beta_0$, the shifted form is coercive, and the conclusion follows from the Lax--Milgram theorem.
\end{proof}

The final part of this section gives an equivalent form of the viscous term used in the energy inequalities.

\begin{proposition}[Extension of the viscous identity]\label{prop2.3}
Let $u,\Phi\in H^1_{\sigma,\tan}(\Omega)$. Then
\begin{equation*}
    \begin{split}
        \int_{\Omega} \nabla \Phi: \nabla u \,dx+\int_{\partial \Omega} \Phi \cdot[\alpha u &-d\boldsymbol{n}(u)] \,dS \\
        & =2\int_{\Omega} S(\Phi):S(u)\,dx+\int_{\partial \Omega} \alpha \Phi \cdot u\,dS.
    \end{split}
\end{equation*}
\end{proposition}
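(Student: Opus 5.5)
The $\alpha$ terms cancel on both sides, so the claim is equivalent to
\[
\int_\Omega \nabla\Phi:\nabla u\,dx-\int_{\partial\Omega}\Phi\cdot d\boldsymbol n(u)\,dS
=2\int_\Omega S(\Phi):S(u)\,dx .
\]
I will prove this for $u,\Phi\in C^\infty_{\sigma,\tan}(\overline\Omega)$ and then pass to the limit using the density established above. The limit passage is routine. Both sides are continuous bilinear forms on $H^1_{\sigma,\tan}(\Omega)$: the volume terms are bounded by $H^1$ norms, and the boundary term is bounded via the trace theorem $H^1(\Omega)\to L^2(\partial\Omega)$ together with $\|d\boldsymbol n\|_{L^\infty}\le\Lambda_\Omega$. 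So the smooth identity extends to $H^1_{\sigma,\tan}(\Omega)$.

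For smooth fields, start from the pointwise identity
\[
2S(\Phi):S(u)=\nabla\Phi:\nabla u+\nabla\Phi:(\nabla u)^T,
\qquad
\nabla\Phi:(\nabla u)^T=\partial_j\Phi_i\,\partial_i u_j .
\]
This reduces the claim to showing
\[
\int_\Omega \partial_j\Phi_i\,\partial_i u_j\,dx=-\int_{\partial\Omega}\Phi\cdot d\boldsymbol n(u)\,dS .
\]
Integrating by parts in $x_j$ gives
\[
\int_{\partial\Omega}\Phi_i\,\partial_i u_j\, n_j\,dS-\int_\Omega\Phi_i\,\partial_i(\partial_j u_j)\,dx .
\]
The volume term vanishes because $\nabla\cdot u=0$. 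The boundary term is $\int_{\partial\Omega}\Phi\cdot\nabla(u\cdot n)\,dS$ minus $\int_{\partial\Omega}\Phi_i u_j\,\partial_i n_j\,dS$. Here $n$ is extended smoothly to a neighborhood of $\partial\Omega$, for instance as the gradient of the signed distance.

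The main point is to identify these two boundary terms. Since $u\cdot n=0$ on $\partial\Omega$ and $\Phi$ is tangent there, $\Phi\cdot\nabla(u\cdot n)$ is a tangential derivative of a function vanishing on $\partial\Omega$, so it is zero. For the remaining term, $\Phi_i\partial_i n_j=(d\boldsymbol n(\Phi))_j$ because $\Phi$ is tangent. Hence it equals $u\cdot d\boldsymbol n(\Phi)$, which equals $\Phi\cdot d\boldsymbol n(u)$ by self-adjointness of $d\boldsymbol n_x$ on $T_x\partial\Omega$. This yields exactly the required $-\int_{\partial\Omega}\Phi\cdot d\boldsymbol n(u)\,dS$.

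Alternatively, one could invoke Lemma~\ref{lem:boundary_identity} directly. However, that lemma presupposes the Navier condition on $u$, which is not assumed here, so I prefer the direct computation above. The only delicate step is the tangential-derivative argument and the sign convention for $d\boldsymbol n$, which I will check against the paper's convention that $-d\boldsymbol n$ is the shape operator.
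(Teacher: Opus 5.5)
Your argument is correct. It shares the paper's overall structure (reduce to the zero-friction identity, prove it for smooth fields, extend by density), but you derive the core identity by direct computation where the paper cites it.

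The paper invokes \cite[Proposition~4.3]{Kelliher2025} with zero friction to obtain $\int_\Omega\nabla\Phi:\nabla u\,dx-\int_{\partial\Omega}\Phi\cdot d\boldsymbol n(u)\,dS=2\int_\Omega S(\Phi):S(u)\,dx$ for smooth $\Phi$ and $u\in H^1_{\sigma,\tan}(\Omega)$. It then adds the $\alpha$-pairing to both sides and uses density in $\Phi$ alone.

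You instead derive the identity from four ingredients: the splitting $2S(\Phi):S(u)=\nabla\Phi:\nabla u+\nabla\Phi:(\nabla u)^T$, one integration by parts together with $\nabla\cdot u=0$, the vanishing of the tangential derivative $\Phi\cdot\nabla(u\cdot n)$, and self-adjointness of $d\boldsymbol n_x$. Because you start with both fields smooth, you need density in both arguments. Section~\ref{sec2} provides it, and joint continuity of both sides justifies the limit.

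Both proofs handle $\alpha\in L^\infty(\partial\Omega)$ the same way: the $\alpha$-terms coincide on the two sides, so no approximation of $\alpha$ is needed.

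The paper's route is shorter but inherits the hypotheses of the cited result. Yours is self-contained, confirms that the sign convention ($d\boldsymbol n$ the differential of the outward normal) agrees with Lemma~\ref{lem:boundary_identity}, and rightly avoids that lemma, which presupposes the Navier condition.

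In the limit passage, state explicitly that for $u\in H^1_{\sigma,\tan}(\Omega)$ the trace satisfies $u\cdot n=0$ a.e.\ on $\partial\Omega$. Then $d\boldsymbol n(u)$ is defined pointwise a.e.\ and obeys $|d\boldsymbol n(u)|\le\Lambda_\Omega|u|$.
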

\begin{proof}
{Apply \cite[Proposition~4.3]{Kelliher2025} with zero
friction coefficient. For
\(\Phi\in C^\infty_{\sigma,\tan}(\overline\Omega)\), this gives}
\[
{
\int_\Omega\nabla\Phi:\nabla u\,dx
-\int_{\partial\Omega}\Phi\cdot d\boldsymbol n(u)\,dS
=
2\int_\Omega S(\Phi):S(u)\,dx.}
\]
{Adding
\(\int_{\partial\Omega}\alpha\,\Phi\cdot u\,dS\) to both sides yields the
claimed identity for smooth \(\Phi\). Since
\(\alpha\in L^\infty(\partial\Omega)\), the added boundary pairing is
continuous on \(H^1(\Omega)\times H^1(\Omega)\) by the trace theorem.
The result for arbitrary
\(\Phi\in H^1_{\sigma,\tan}(\Omega)\) follows from the
\(H^1\)-density of
\(C^\infty_{\sigma,\tan}(\overline\Omega)\). No approximation of
\(\alpha\) is required.}
\end{proof}

\section{Global Existence of Weak Solutions}\label{sec3}

We now define a weak solution.

\begin{defn}[Weak Solution]\label{def:weak_sol}
A pair $(u,\rho)$ is a weak solution to (\ref{eq:bouss_momentum})-(\ref{eq:navier_bc}) with initial data $(u_0,\rho_0)\in\mathcal{H}$ if:
\begin{itemize}
    \item $(u, \rho) \in L^2(0,T; \mathcal{V}) \cap C_w([0,T];\mathcal{H})$, where $C_w([0,T];\mathcal{H})$ denotes functions continuous from $[0,T]$ to $\mathcal{H}$ with the weak topology,
    \item $\partial_t (u,\rho) \in L^{4/3}(0,T; \mathcal{V}')$,
    \item For every test pair $(v,\phi)\in C_c^1([0,T);\mathcal V)$, the following identity holds:    \begin{equation}\label{weaksolution1}
    \begin{aligned}
    &-\int_0^T (u,\partial_t v)_{L^2}\,dt
    -\int_0^T (\rho,\partial_t\phi)_{L^2}\,dt \\
    &\quad+\int_0^T \mathscr{B}((u,\rho),(v,\phi))\,dt+\int_0^T\int_\Omega(u\cdot\nabla)u\cdot v\,dx\,dt+\int_0^T\int_\Omega (u\cdot\nabla\rho)\phi\,dx\,dt \\
    &=\int_0^T\int_\Omega \rho e_3\cdot v\,dx\,dt
    +(u_0,v(0))_{L^2}+(\rho_0,\phi(0))_{L^2}.
    \end{aligned}
    \end{equation}
\end{itemize}
\end{defn}

\begin{theorem}[Global existence of weak solutions] \label{thm:global_existence}
For any initial data $(u_0,\rho_0)\in\mathcal{H}$, there exists a global weak solution $(u,\rho)$ in the sense of Definition~\ref{def:weak_sol}.
\end{theorem}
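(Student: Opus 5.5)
We follow the classical Faedo--Galerkin scheme, using a spectral basis adapted to the Navier condition. By Proposition~\ref{prop:bouss_op_props}, for fixed $\beta>\beta_0$ the shifted operator $\widetilde{\boussop}_\beta$ is a symmetric isomorphism $\mathcal V\to\mathcal V'$. Its inverse, restricted to $\mathcal H$, is compact (Rellich on bounded smooth $\Omega$), self-adjoint and positive. Since $\mathcal V$ is dense in $\mathcal H$ (shown in Section~\ref{sec2}), the spectral theorem gives an orthonormal basis of $\mathcal H$ consisting of eigenfunctions that are also orthogonal and complete in $\mathcal V$. In fact the form decouples in $u$ and $\rho$, so we may take separate bases: eigenfunctions $w_k\in H^1_{\sigma,\tan}(\Omega)$ for the velocity part and Dirichlet eigenfunctions $\phi_k\in H^1_0(\Omega)$ for the scalar. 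Let $P_m$ denote the $\mathcal H$-orthogonal projection onto the span of the first $m$ of each.

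We then seek $u_m=\sum_{k\le m} a_k(t)w_k$ and $\rho_m=\sum_{k\le m} b_k(t)\phi_k$ solving the projected weak formulation \eqref{weaksolution1} against the test functions $w_j,\phi_j$, with $u_m(0)=P_m u_0$ and $\rho_m(0)=P_m\rho_0$. This is a finite ODE system with quadratic nonlinearity, so it has a local solution. For the a priori bounds we test with $(u_m,\rho_m)$. The trilinear terms vanish since $u_m$ is divergence free with $u_m\cdot n=0$, which gives $\int(u\cdot\nabla)u\cdot u=0$ and $\int (u\cdot\nabla\rho)\rho=0$. The scalar identity gives $\|\rho_m(t)\|_{L^2}\le\|\rho_0\|_{L^2}$ and $\kappa\int_0^T\|\nabla\rho_m\|^2\le \tfrac12\|\rho_0\|^2$. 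For the velocity, Proposition~\ref{prop2.3} rewrites the viscous form as $2\nu\|S(u_m)\|^2+\nu\int\alpha|u_m|^2\,dS$. Alternatively, the coercivity of Proposition~\ref{prop:bouss_op_props} gives $\frac{d}{dt}\|u_m\|^2+C_2\|\nabla u_m\|^2\le 2\beta_0\|u_m\|^2+2\|\rho_m\|\|u_m\|$. Gronwall then yields bounds in $L^\infty(0,T;\mathcal H)\cap L^2(0,T;\mathcal V)$ uniform in $m$ and depending on $T$, which is acceptable since $T$ is arbitrary. These bounds also give global existence of the ODE solution.

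For compactness we bound time derivatives. Since $\|P_m\|_{\mathcal V\to\mathcal V}\le 1$ for the spectral basis, we get $\|\partial_t(u_m,\rho_m)\|_{\mathcal V'}\le C\|(u_m,\rho_m)\|_{\mathcal V}+\|(u_m\cdot\nabla)u_m\|_{\mathcal V'}+\|u_m\rho_m\|_{L^2}+\|\rho_m\|$. The nonlinear terms are estimated by H\"older and Ladyzhenskaya/Gagliardo--Nirenberg in 3D: $\|u\|_{L^3}^2\|\nabla u\|\lesssim\|u\|^{1/2}\|\nabla u\|^{3/2}\cdot\ldots$. This gives $\|(u\cdot\nabla)u\|_{\mathcal V'}\lesssim \|u\|^{1/2}\|u\|_{H^1}^{3/2}$, which lies in $L^{4/3}$ in time, and similarly $\|u\rho\|_{\mathcal V'}\lesssim \|u\|_{L^3}\|\rho\|_{L^2}\ldots$, treated via the $L^3$ norm against $\nabla\phi$, again in $L^{4/3}$. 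Aubin--Lions with $\mathcal V\Subset\mathcal H\hookrightarrow\mathcal V'$ then gives strong convergence in $L^2(0,T;\mathcal H)$, weak convergence in $L^2(0,T;\mathcal V)$, and weak-$*$ convergence in $L^\infty(0,T;\mathcal H)$ along a subsequence.

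We pass to the limit first for test functions of the form $\eta(t)(w_j,\phi_j)$ and conclude for general $C^1_c([0,T);\mathcal V)$ by density. The linear terms and the boundary integrals of $\mathscr B$ pass by weak convergence, since $\mathscr B$ is bounded on $\mathcal V$. The nonlinear terms pass through strong $L^2$ convergence combined with weak $L^2(H^1)$ convergence. Writing $\int(u\cdot\nabla u)\cdot v$ as $-\int (u\cdot\nabla v)\cdot u$ is valid since $u\cdot n=0$; likewise for $\rho$. Weak continuity $C_w([0,T];\mathcal H)$ follows from $L^\infty(\mathcal H)$ together with $\partial_t\in L^{4/3}(\mathcal V')$, which gives continuity into $\mathcal V'$, and a standard lemma. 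The initial data are recovered from $P_m(u_0,\rho_0)\to(u_0,\rho_0)$ in $\mathcal H$. Finally, the solution is global: by a diagonal argument over $T\in\mathbb N$, or simply because $T$ is arbitrary. The main obstacle is the uniform $\mathcal V'$ bound on the time derivative. It requires the projections $P_m$ to be stable on $\mathcal V$, which is exactly why the basis must be built from $\widetilde{\boussop}_\beta$ rather than from an arbitrary basis. It also requires the boundary terms with $d\boldsymbol n$ and $\alpha$ to be handled through the shifted coercivity rather than through a sign condition.
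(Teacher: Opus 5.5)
Your proposal is correct and follows essentially the same route as the paper: a spectral Galerkin basis for the shifted form, so that the $L^2$ projections are uniformly $H^1$-stable, then Gronwall energy bounds on finite intervals, $L^{4/3}(0,T;\mathcal V')$ bounds on the time derivatives, Aubin--Lions compactness, passage to the limit with a density extension of the test class, and Lions--Magenes weak continuity. The only slips are minor: $\|P_m\|\le 1$ holds in the norm induced by the shifted form, so it is $\le C$ in the $\mathcal V$-norm; and the scalar flux bound should read $\|u\rho\|_{L^2}\le\|u\|_{L^3}\|\rho\|_{L^6}\lesssim\|u\|_{L^2}^{1/2}\|u\|_{H^1}^{1/2}\|\rho\|_{H^1}$, with $\|\rho\|_{L^6}$ rather than $\|\rho\|_{L^2}$, which is what gives the $L^{4/3}$-in-time bound.
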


\begin{proof}
The proof follows the classical Galerkin--Hopf construction; see also \cite{Kelliher2025} for a closely related argument. Define
\[
a_0(W,Z)
:=\int_\Omega\nabla W:\nabla Z\,dx
+\int_{\partial\Omega}Z\cdot[\alpha W-dn(W)]\,dS,
\qquad W,Z\in H^1_{\sigma,\tan}(\Omega),
\]
and, for a fixed $\beta>\beta_0/\nu$, set
\[
a_\beta(W,Z):=a_0(W,Z)+\beta(W,Z)_{L^2(\Omega)}.
\]
By Proposition~\ref{prop:bouss_op_props}, $a_\beta$ is coercive and defines an inner product equivalent to the $H^1$ inner product on $H^1_{\sigma,\tan}(\Omega)$. The compact embedding $H^1_{\sigma,\tan}(\Omega)\hookrightarrow L^2_{\sigma,\tan}(\Omega)$ and the spectral theorem give eigenvalues $\{\lambda_j\}_{j=1}^\infty$ and an $L^2$-orthonormal basis $\{v_j\}_{j=1}^\infty\subset H^1_{\sigma,\tan}(\Omega)$ such that
\[
a_\beta(v_j,Z)=\lambda_j(v_j,Z)_{L^2(\Omega)}
\qquad\text{for every }Z\in H^1_{\sigma,\tan}(\Omega).
\]
Writing $\mu_j:=\lambda_j-\beta$, we equivalently have
\[
a_0(v_j,Z)=\mu_j(v_j,Z)_{L^2(\Omega)}.
\]
This construction uses only $\alpha\in L^\infty(\partial\Omega)$. Similarly, let $\{\theta_j\}_{j=1}^\infty\subset H^1_0(\Omega)$ be an $L^2$-orthonormal basis of $L^2(\Omega)$ consisting of eigenfunctions of the Dirichlet Laplacian. For each $m$, set 
\[
X^m:=\operatorname{span}\{v_1,\dots,v_m\},
\qquad
Y^m:=\operatorname{span}\{\theta_1,\dots,\theta_m\}.
\]
We seek approximate solutions of the form
\[
u^m(t,x)=\sum_{j=1}^m g_j^m(t)v_j(x),
\qquad
\rho^m(t,x)=\sum_{j=1}^m h_j^m(t)\theta_j(x),
\]
with initial condition
\[
u^m(0)=P^m u_0,
\qquad
\rho^m(0)=Q^m\rho_0.
\]
Here $P^m$ and $Q^m$ denote the $L^2$-orthogonal projections onto $X^m$ and $Y^m$, respectively. Hence,
\[
(u^m(0),\rho^m(0))\to (u_0,\rho_0)\quad\text{strongly in }\mathcal H.
\]

\par\smallskip
For $j=1,\ldots,m$, testing the Galerkin formulation against $v_j$ and $\theta_j$, we obtain
\begin{equation}
    \begin{split}
        &\int_{\Omega}\partial_t u^m \cdot v_j\,dx
        +\int_{\Omega}\big[(u^m\cdot \nabla)u^m\big]\cdot v_j\,dx\\
        &\qquad=-\nu\bigg(\int_\Omega \nabla u^m:\nabla v_j\,dx+\int_{\partial\Omega}
        [\alpha u^m_{\tanproj}-dn(u^m)]\cdot v_j\,dS
        \bigg)+\int_\Omega \rho^m e_3\cdot v_j\,dx ,
    \end{split}
    \label{galerkin-u}
\end{equation}
and
\begin{equation}
\int_{\Omega}\partial_t \rho^m\,\theta_j\,dx+\int_{\Omega}(u^m\cdot\nabla\rho^m)\theta_j\,dx=-\kappa\int_\Omega\nabla\rho^m\cdot\nabla\theta_j\,dx.\label{galerkin-rho}
    \end{equation}
These equations yield a finite-dimensional system of ODEs for the unknown variables $\{g_j^m(t)\}_{j=1}^m$ and $\{h_j^m(t)\}_{j=1}^m$. Since the right-hand sides are continuous and locally Lipschitz, the classical ODE theory gives a local-in-time solution.

Let $t\in [0,T_m)$ on the maximal interval of existence for each $m$. A standard energy argument for \eqref{galerkin-u} and \eqref{galerkin-rho} gives the following estimate. Indeed, multiply \eqref{galerkin-u} by \(g_j^m(t)\),
multiply \eqref{galerkin-rho} by \(h_j^m(t)\), and sum over
\(j=1,\ldots,m\). The transport terms vanish because
\(\nabla\cdot u^m=0\), \(u^m\cdot n=0\), and \(\rho^m=0\) on
\(\partial\Omega\), while Young's inequality gives
\[
\left|\int_\Omega \rho^m e_3\cdot u^m\,dx\right|
\le \frac12\|\rho^m\|_{L^2}^2+\frac12\|u^m\|_{L^2}^2.
\]
Consequently,
\begin{multline*}
\frac{1}{2} \frac{d}{dt}
\left(
\|u^m\|_{L^2}^2+\|\rho^m\|_{L^2}^2
\right)
+\nu\|\nabla(u^m)\|_{L^2}^2
+\kappa\|\nabla\rho^m\|_{L^2}^2  \\
+\nu\int_{\partial\Omega}\alpha |u^m_{\tanproj}|^2\,dS
-\nu\int_{\partial\Omega}u^m\cdot dn(u^m)\,dS
\le \frac12\|\rho^m\|_{L^2}^2+\frac12\|u^m\|_{L^2}^2
\end{multline*}

Since $\Lambda_\Omega=\sup_{x\in\partial\Omega}\max_i|k_i(x)|$, we have
\[
\begin{aligned}
\int_{\partial\Omega}\left(u^m\cdot dn(u^m)-\alpha |u^m_{\tanproj}|^2\right)\,dS
&\le\int_{\partial\Omega}\left(\Lambda_\Omega-\alpha(x)\right)|u^m|^2\,dS \\
&\le\epsilon\|\nabla u^m\|^2_{L^2(\Omega)}+C_\epsilon\|u^m\|^2_{L^2(\Omega)},
\end{aligned}
\]
where we applied the trace and Young's inequality.
Choosing $\epsilon>0$ small enough and absorbing into dissipation yields 
\[
\frac{d}{dt}E^m(t)+c_0(\|\nabla u^m\|^2_{L^2}+\|\nabla\rho^m\|_{L^2}^2)\le C_0E^m(t)
\]
where
\[
E^m(t):=\|u^m(t)\|^2_{L^2}+\|\rho^m(t)\|^2_{L^2},
\qquad
c_0:=2\min\{\nu(1-\epsilon),\kappa\}.
\]
By the Gronwall inequality,
\begin{equation}\label{GronwallEm}
E^m(t)\le E^m(0)e^{C_0T}
\end{equation}
for any $t\le T$.
Since
\[
\|u^m(t)\|^2_{L^2(\Omega)}
=\sum_{j=1}^m |g_j^m(t)|^2,
\qquad
\|\rho^m(t)\|^2_{L^2(\Omega)}
=\sum_{j=1}^m |h_j^m(t)|^2,
\]
\eqref{GronwallEm} gives a uniform bound on any finite time interval for $E^m(t)$. Hence finite-time blow-up cannot occur, and the solution exists globally in time for each fixed $m$. Moreover, by integrating the differential inequality from $0$ to $t$, we obtain
\begin{align}\label{energybond}
\begin{split}
E^m(t)+c_0\int_0^t\left(\|\nabla u^m(s)\|^2_{L^2}+\|\nabla\rho^m(s)\|_{L^2}^2\right)\,ds
&\le C_0\int_0^tE^m(s)\,ds+E^m(0)\\
&\le E^m(0)e^{C_0t}\le E(0)e^{C_0T},
\end{split}
\end{align}
for every $t\le T$. Here we used the fact $P^m$ and $Q^m$ are $L^2$-orthogonal projections so
\[
E^m(0)=\|P^m u_0\|^2_{L^2}+\|Q^m\rho_0\|^2_{L^2}
\le \|u_0\|^2_{L^2}+\|\rho_0\|^2_{L^2}=E(0).
\]
This implies
\[
({u}^m,\rho^m)\in L^\infty(0,T;\mathcal H)\cap L^2(0,T;\mathcal V)
\]
for any $T>0$ uniformly in \(m\). 

The following estimates provide the uniform time-derivative bounds in
$L^{4/3}(0,T;\mathcal V')$ required by the Aubin--Lions lemma.
Let $(W,V)\in H^1_{\sigma,\tan}(\Omega)\times H^1_0(\Omega)$. Since $\partial_t u^m\in X^m$ and $\partial_t \rho^m\in Y^m$, we have
\[
\int_\Omega \partial_tu^m\cdot W\,dx
=
\int_\Omega \partial_tu^m\cdot P^mW\,dx,
\qquad
\int_\Omega \partial_t\rho^m V\,dx
=
\int_\Omega \partial_t\rho^m Q^mV\,dx.
\]
For the shifted form defined above, the eigenvalue relation for $a_0$ gives
\[
a_\beta(v_i,v_j)=(\mu_j+\beta)\delta_{ij},
\qquad
a_\beta(W,v_j)=(\mu_j+\beta)(W,v_j)_{L^2(\Omega)}.
\]
Consequently, the $L^2$ projection $P^m$ is also the $a_\beta$-orthogonal projection onto $X^m$. Since the $v_j$ are complete in $L^2_{\sigma,\tan}(\Omega)$, their span has trivial $a_\beta$-orthogonal complement and is therefore dense in the $a_\beta$-norm. Hence
\[
\|P^mW\|_{H^1(\Omega)}\le C\|W\|_{H^1(\Omega)},
\qquad
P^mW\to W\quad\text{strongly in }H^1_{\sigma,\tan}(\Omega).
\]
Similarly, the Dirichlet eigenfunctions are orthogonal for the $H^1_0$ inner product, so
\[
\|Q^mV\|_{H^1(\Omega)}\le C\|V\|_{H^1(\Omega)},
\qquad
Q^mV\to V\quad\text{strongly in }H^1_0(\Omega).
\]
Using \(P^mW\in X^m\) as a test function in
\eqref{galerkin-u}, we obtain
\begin{align*}
\begin{split}
\bigg|\int_{\Omega}\partial_tu^m\cdot W\,dx\bigg|&=\bigg|\int_{\Omega}\partial_tu^m\cdot P^mW\,dx\bigg|\\
&\le \bigg|\int_\Omega[(u^m\cdot\nabla)u^m]\cdot P^mW\,dx\bigg|
+\nu\bigg|\int_\Omega \nabla u^m:\nabla P^mW\,dx\bigg|\\
&+\nu\bigg|\int_{\partial\Omega}[\alpha u^m-d{n}(u^m)]\cdot P^mW\,dS\bigg|+\bigg|\int_\Omega\rho^m e_3\cdot P^mW\,dx\bigg|,
\end{split}
\end{align*}
Similarly, testing \eqref{galerkin-rho} with
\(Q^mV\in Y^m\) gives
\begin{align*}
\begin{split}
\bigg|\int_{\Omega}\partial_t\rho^m V\,dx\bigg|&=\bigg|\int_{\Omega}\partial_t\rho^m Q^mV\,dx\bigg|\\
&\le\bigg|\int_{\Omega}[(u^m\cdot\nabla)\rho^m] Q^mV\,dx\bigg|+\kappa\bigg|\int_{\Omega}\nabla\rho^m\cdot\nabla Q^mV\,dx\bigg|.
\end{split}
\end{align*}
Using the uniform \(H^1\)-stability of \(P^m\), the velocity diffusion term
satisfies
\[
\nu\bigg|\int_\Omega \nabla u^m:\nabla P^mW\,dx\bigg|
\le C\nu\|\nabla u^m\|_{L^2(\Omega)}\|W\|_{H^1(\Omega)}.
\]
For the velocity convection term, by
H\"older's inequality, interpolation, and
$H^1(\Omega)\hookrightarrow L^6(\Omega)$,
we have
\begin{align*}
\bigg|\int_\Omega[(u^m\cdot\nabla)u^m]\cdot P^mW\,dx\bigg|
&\le \|u^m\|_{L^3}\|\nabla u^m\|_{L^2}\|P^mW\|_{L^6}\\
&\le C\|u^m\|_{L^2}^{1/2}\|\nabla u^m\|_{L^2}^{3/2}\|W\|_{H^1}.
\end{align*}
By the trace theorem, the estimate
$|d\boldsymbol n(z)|\le\Lambda_\Omega|z|$ on $\partial\Omega$, and the uniform
$H^1$-stability of $P^m$, the boundary term satisfies
\begin{align*}
\nu\bigg|\int_{\partial\Omega}[\alpha u^m-d\boldsymbol n(u^m)]\cdot P^mW\,dS\bigg|
&\le C\nu\bigl(\|\alpha\|_{L^\infty(\partial\Omega)}+\Lambda_\Omega\bigr)
\|u^m\|_{H^1(\Omega)}\|W\|_{H^1(\Omega)}.
\end{align*}
By the Cauchy--Schwarz inequality and the uniform
\(H^1\)-boundedness of \(P^m\), the buoyancy term is bounded by
\[
\bigg|\int_\Omega\rho^m e_3\cdot P^mW\,dx\bigg|
\le C\|\rho^m\|_{L^2(\Omega)}\|W\|_{H^1(\Omega)}.
\]
For the scalar convection term, integration by parts, followed by H\"older's
inequality and Sobolev embedding, gives
\begin{align*}
\bigg|\int_\Omega (u^m\cdot\nabla)\rho^m\,Q^mV\,dx\bigg|
&=\bigg|\int_\Omega \rho^m u^m\cdot\nabla Q^mV\,dx\bigg|\\
&\le C\|u^m\|_{L^2}^{1/2}\|u^m\|_{H^1}^{1/2}
\|\rho^m\|_{H^1}\|V\|_{H^1}.
\end{align*}
Consequently,
\[
\|\partial_tu^m\|_{(H^1_{\sigma,\tan}(\Omega))'}
\le C\left(
\|u^m\|_{L^2}^{1/2}\|u^m\|_{H^1}^{3/2}
+\|u^m\|_{H^1}+\|\rho^m\|_{L^2}
\right),
\]
and
\[
\|\partial_t\rho^m\|_{(H^1_0(\Omega))'}
\le C\left(
\|u^m\|_{L^2}^{1/2}\|u^m\|_{H^1}^{1/2}\|\rho^m\|_{H^1}
+\|\rho^m\|_{H^1}
\right).
\]
It follows that
\begin{align*}
\int_0^T\|\partial_tu^m\|_{(H^1_{\sigma,\tan}(\Omega))'}^{4/3}\,ds
&\le C\|u^m\|_{L^\infty(0,T;L^2(\Omega))}^{2/3}
       \|u^m\|_{L^2(0,T;H^1(\Omega))}^2\\
&\quad+CT^{1/3}\|u^m\|_{L^2(0,T;H^1(\Omega))}^{4/3}
      +CT^{1/3}\|\rho^m\|_{L^2(0,T;L^2(\Omega))}^{4/3}\\
&\le C(E(0)e^{C_0T})^{4/3}
  +CT^{1/3}(E(0)e^{C_0T})^{2/3},
\end{align*}
where constants have been enlarged in the final line. Similarly,
\begin{align*}
\int_0^T\|\partial_t\rho^m\|_{(H^1_0(\Omega))'}^{4/3}\,ds
&\le C\|u^m\|_{L^\infty(0,T;L^2(\Omega))}^{2/3}
       \|u^m\|_{L^2(0,T;H^1(\Omega))}^{2/3}
       \|\rho^m\|_{L^2(0,T;H^1_0(\Omega))}^{4/3}\\
&\quad+CT^{1/3}\|\rho^m\|_{L^2(0,T;H^1_0(\Omega))}^{4/3}\\
&\le C(E(0)e^{C_0T})^{4/3}
  +CT^{1/3}(E(0)e^{C_0T})^{2/3}.
\end{align*}
Therefore,
\[
\|\partial_t u^m\|_{L^{4/3}(0,T;(H^1_{\sigma,\tan}(\Omega))')}
+\|\partial_t\rho^m\|_{L^{4/3}(0,T;(H^1_0(\Omega))')}
\le C_T
\]
for every $T>0$ and every $m\in\mathbb{N}$.

These estimates allow us to apply the Aubin--Lions lemma, which gives strong convergence of a subsequence of $(u^m,\rho^m)$ to $(u,\rho)$ in $L^2(0,T;\mathcal H)$. Combining this with the Banach--Alaoglu theorem and passing to a further subsequence, still indexed by \(m\), we have
\[
u^m\rightarrow u \text{\quad strongly in\quad} L^2(0,T;L^2(\Omega;\mathbb R^3)),
\]
\[
u^m\overset{\ast}{\rightharpoonup} u \text{\quad weakly-* in\quad} L^\infty(0,T;L^2_{\sigma,\tan}(\Omega)),
\]
\[
\nabla u^m\rightharpoonup \nabla u \text{\quad weakly in\quad} L^2(0,T;L^2(\Omega;\mathbb R^{3\times3})),
\]
\[
\rho^m\rightarrow \rho \text{\quad strongly in\quad} L^2(0,T;L^2),
\]
\[
\rho^m\overset{\ast}{\rightharpoonup} \rho\text{\quad weakly-* in\quad} L^\infty(0,T;L^2),
\]
\[
\nabla\rho^m\rightharpoonup \nabla\rho \text{\quad weakly in\quad} L^2(0,T;L^2(\Omega;\mathbb R^3)).
\]
A diagonal extraction in $M=1,2,\dots$ gives a subsequence, not relabeled,
along which all of these convergences hold on every finite time interval.

{For each \(M\in\mathbb N\), the uniform
time-derivative bounds and reflexivity give, after a further diagonal
extraction,}
\[
{
\partial_t(u^m,\rho^m)\rightharpoonup G_M
\quad\text{weakly in }L^{4/3}(0,M;\mathcal V')}
\]
{for some
\(G_M\in L^{4/3}(0,M;\mathcal V')\). For every
\(\Psi\in C_c^\infty(0,M;\mathcal V)\),}
\[
{
\int_0^M\langle G_M,\Psi\rangle\,dt
=
\lim_{m\to\infty}
\int_0^M\langle\partial_t(u^m,\rho^m),\Psi\rangle\,dt
=
-\int_0^M((u,\rho),\partial_t\Psi)_{\mathcal H}\,dt.}
\]
Hence
\(G_M=\partial_t(u,\rho)\) distributionally on \((0,M)\). Since \(M\) is
arbitrary,
\[
{
\partial_t(u,\rho)\in
L^{4/3}_{\mathrm{loc}}([0,\infty);\mathcal V').}
\]
Here the derivative is understood through the preceding
\(\mathcal V'\)-valued integration-by-parts identity.
After a further diagonal extraction, still along a subsequence not relabeled,
we also have
\[
{
(u^m(t),\rho^m(t))\longrightarrow(u(t),\rho(t))
\quad\text{strongly in }\mathcal H}
\]
{for almost every \(t>0\), while all the preceding convergences remain valid
on every finite time interval.}

Now fix $(v,\phi)\in C_c^\infty([0,T);C^\infty_{\sigma,\tan}(\overline{\Omega}))
\times C_c^\infty([0,T);C_0^\infty(\Omega))$. In the Galerkin equations we use the admissible test pair $(P^mv,Q^m\phi)$. Since the projections are uniformly bounded and converge strongly in the corresponding $H^1$ spaces, their convergence is uniform on the compact ranges of $v$, $\partial_tv$, $\phi$, and $\partial_t\phi$. Therefore
\[
P^mv\to v
\quad\text{strongly in }C^1([0,T];H^1_{\sigma,\tan}(\Omega)),
\qquad
Q^m\phi\to\phi
\quad\text{strongly in }C^1([0,T];H^1_0(\Omega)).
\]
Moreover, the uniform $H^1$ bounds on the projections and the standard trilinear estimates show that replacing $(P^m v,Q^m\phi)$ by $(v,\phi)$ in each limit introduces an error tending to zero. The strong \(L^2\) convergence gives the limits of the
time-derivative terms:
\[
\int_0^T\int_\Omega u^m\cdot\partial_tv\,dx\,dt\rightarrow\int_0^T\int_\Omega u\cdot\partial_tv\,dx\,dt\quad\text{ and }\quad \int_0^T\int_\Omega\rho^m\partial_t\phi \,dx\,dt\rightarrow\int_0^T\int_\Omega\rho\partial_t\phi\,dx\,dt.
\]
The buoyancy term converges by the strong \(L^2\) convergence of \(\rho^m\),
whereas the two diffusion terms converge by the weak \(L^2\) convergence of
\(\nabla u^m\) and \(\nabla\rho^m\), respectively:
\[
\int_0^T\int_\Omega\rho^m e_3\cdot v\,dx\,dt\rightarrow\int_0^T\int_\Omega\rho e_3\cdot v\,dx\,dt,
\]
\[
\int_0^T\int_\Omega \nabla u^m:\nabla v\,dx\,dt\rightarrow\int_0^T\int_\Omega \nabla u:\nabla v\,dx\,dt,
\]
and
\[
\int_0^T\int_\Omega \nabla\rho^m\cdot\nabla\phi\,dx\,dt\rightarrow\int_0^T\int_\Omega \nabla\rho\cdot\nabla\phi\,dx\,dt.
\]
Using incompressibility and the tangency condition, we write
\[
\int_0^T\int_\Omega [(u^m\cdot\nabla)u^m]\cdot v\,dx\,dt
=-\int_0^T\int_\Omega (u^m\otimes u^m):\nabla v\,dx\,dt.
\]
Since $u^m\to u$ strongly in $L^2(0,T;L^2(\Omega))$, we have
$u^m\otimes u^m\to u\otimes u$ strongly in $L^1((0,T)\times\Omega)$, and hence
\[
\int_0^T\int_\Omega [(u^m\cdot\nabla)u^m]\cdot v\,dx\,dt
\longrightarrow
\int_0^T\int_\Omega [(u\cdot\nabla)u]\cdot v\,dx\,dt.
\]
Likewise,
\[
\int_0^T\int_\Omega (u^m\cdot\nabla\rho^m)\phi\,dx\,dt
=-\int_0^T\int_\Omega \rho^m u^m\cdot\nabla\phi\,dx\,dt,
\]
and the strong $L^2$ convergence of both factors gives convergence to the corresponding limit term.
For the boundary terms, H\"older's inequality on
\((0,T)\times\partial\Omega\), the bound
\(|d\boldsymbol n(z)|\le\Lambda_\Omega|z|\), and the trace interpolation
inequality give
\begin{align}\label{boundest}
\begin{split}
    &\bigg|\int_0^T\int_{\partial\Omega} (\alpha{u}^m_{\tan}\cdot{v}_{\tan}-d{n}(u^m)\cdot {v})-(\alpha u_{\tan}\cdot v_{\tan}-d{n}(u)\cdot v)\,dS\,dt\bigg|\\
    \le &\|\alpha\|_{L^\infty(\partial \Omega)}\int_0^T\int_{\partial\Omega}|u^m_{\tan}-u_{\tan}|\cdot |{v}|\,dS\,dt+\int_0^T\int_{\partial\Omega}|d{n}(u^m-u)||v|\,dS\,dt\\
    \le &\|\alpha\|_{L^\infty(\partial \Omega)}\|u^m-u\|_{L^2(0,T;L^2(\partial\Omega))}\|v\|_{L^2(0,T;L^2(\partial\Omega))}\\
    &+\Lambda_\Omega\|u^m-u\|_{L^2(0,T;L^2(\partial\Omega))}\|v\|_{L^2(0,T;L^2(\partial\Omega))}\\
    \lesssim &\|\alpha\|_{L^\infty(\partial\Omega)}\|u^m-u\|_{L^2(0,T;H^1(\Omega))}^\frac{1}{2}\|u^m-u\|^\frac{1}{2}_{L^2(0,T;L^2(\Omega))}\|{v}\|_{L^2(0,T;H^1(\Omega))}\\
    &+\Lambda_\Omega\|u^m-u\|_{L^2(0,T;H^1(\Omega))}^\frac{1}{2}\|u^m-u\|_{L^2(0,T;L^2(\Omega))}^\frac{1}{2}\|v\|_{L^2(0,T;H^1(\Omega))}\\
    \lesssim &(\|\alpha\|_{L^\infty(\partial\Omega)}+\Lambda_\Omega)
    (\|u^m\|_{L^2(0,T;H^1(\Omega))}+\|u\|_{L^2(0,T;H^1(\Omega))})^\frac{1}{2}
    \|u^m-u\|_{L^2(0,T;L^2(\Omega))}^\frac{1}{2}\\
    &\qquad \times\|v\|_{L^2(0,T;H^1(\Omega))}
    \longrightarrow 0.
    \end{split}
\end{align}
Passing to the limit in the Galerkin formulation, we first obtain the weak formulation
for all smooth test pairs
\[
(v,\phi)\in C_c^\infty([0,T);C^\infty_{\sigma,\tan}(\overline{\Omega}))
\times C_c^\infty([0,T);C_0^\infty(\Omega)).
\]
By the density of $C^\infty_{\sigma,\tan}(\overline{\Omega})\times C_c^\infty(\Omega)$ in $\mathcal V$, together with standard approximation in time, this identity extends to every $(v,\phi)\in C_c^1([0,T);\mathcal V)$. Therefore the weak formulation holds for every test pair required in Definition~\ref{def:weak_sol}.
Since the bounds obtained above also give
\[
(u,\rho)\in L^\infty(0,T;\mathcal H)\cap L^2(0,T;\mathcal{V}),
\qquad
\partial_t(u,\rho)\in L^{4/3}(0,T;\mathcal{V}'),
\]
the standard Lions--Magenes weak-continuity theorem yields $(u,\rho)\in C_w([0,T];\mathcal H)$. We conclude that $(u,\rho)$ is a weak solution in the sense of Definition~\ref{def:weak_sol}.

\end{proof}

\section{Energy and Korn--Poincar\'e inequalities}\label{sec4}

The Galerkin-limit solution constructed in Theorem~\ref{thm:global_existence} satisfies the following energy inequalities.

\begin{lemma}[Energy inequalities]\label{lem:energy_inequalities}
Let $(u,\rho)$ be the weak solution constructed in Theorem~\ref{thm:global_existence}. There exists a full-measure set $\mathcal T\subset[0,\infty)$ with $0\in\mathcal T$ such that, for every $s\in\mathcal T$ and every $t\ge s$, the following inequalities hold:

\begin{align}
&\frac{1}{2}\|u(t)\|_{L^2}^2
  + \nu \int_s^t \|\nabla u\|_{L^2}^2\,d\tau
\nonumber \\
&\qquad \leq
  \frac{1}{2}\|u(s)\|_{L^2}^2
  +\nu\int_s^t \int_{\partial\Omega}
  \bigl(\Lambda_\Omega-\alpha\bigr)
  |u_{\tanproj}|^2\,dS\,d\tau
  +\int_s^t\int_\Omega \rho e_3\cdot u\,dx\,d\tau,\label{eq:energy_ineq1}
\\[0.5em]
&\frac{1}{2}\|u(t)\|_{L^2}^2
  +2\nu\int_s^t\|S(u)\|_{L^2}^2\,d\tau
\nonumber \\
&\qquad \leq
  \frac{1}{2}\|u(s)\|_{L^2}^2
  -\nu\int_s^t\int_{\partial\Omega}
  \alpha|u_{\tanproj}|^2 \, dS \, d\tau
  +\int_s^t\int_\Omega \rho e_3\cdot u\,dx\,d\tau,\label{eq:energy_ineq2}
\\[0.5em]
&\frac{1}{2}\|\rho(t)\|_{L^2}^2
  +\kappa\int_s^t\|\nabla\rho\|_{L^2}^2\,d\tau\leq\frac{1}{2}\|\rho(s)\|_{L^2}^2.\label{eq:rho_energy_ineq}
\end{align}
\end{lemma}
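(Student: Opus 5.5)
The plan is to derive all three inequalities at the Galerkin level, where they are exact identities or straightforward inequalities, and then pass to the limit $m\to\infty$ using the convergences from the proof of Theorem~\ref{thm:global_existence}. Test \eqref{galerkin-u} with $u^m$ and \eqref{galerkin-rho} with $\rho^m$ (multiply by $g_j^m$, $h_j^m$ and sum). The transport terms vanish because $\nabla\cdot u^m=0$, $u^m\cdot n=0$ and $\rho^m=0$ on $\partial\Omega$. This gives, for all $0\le s\le t$, the exact identities
\begin{align*}
&\tfrac12\|u^m(t)\|_{L^2}^2+\nu\int_s^t\Big(\|\nabla u^m\|_{L^2}^2+\int_{\partial\Omega}\big(\alpha|u^m_{\tanproj}|^2-u^m\cdot d\boldsymbol n(u^m)\big)\,dS\Big)d\tau\\
&\qquad=\tfrac12\|u^m(s)\|_{L^2}^2+\int_s^t\int_\Omega\rho^m e_3\cdot u^m\,dx\,d\tau,
\end{align*}
and $\tfrac12\|\rho^m(t)\|^2+\kappa\int_s^t\|\nabla\rho^m\|^2=\tfrac12\|\rho^m(s)\|^2$.

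From the velocity identity, the bound $u^m\cdot d\boldsymbol n(u^m)\le\Lambda_\Omega|u^m_{\tanproj}|^2$ (recall $u^m$ is tangent on $\partial\Omega$) yields the Galerkin version of \eqref{eq:energy_ineq1}. Applying Proposition~\ref{prop2.3} with $\Phi=u=u^m$ rewrites the dissipation as $2\nu\|S(u^m)\|^2+\nu\int_{\partial\Omega}\alpha|u^m|^2$, which gives an identity, hence the Galerkin version of \eqref{eq:energy_ineq2}.

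Next I define $\mathcal T$ as the set of times $s$ where $(u^m(s),\rho^m(s))\to(u(s),\rho(s))$ strongly in $\mathcal H$ along the extracted subsequence. By the last diagonal extraction in the proof of Theorem~\ref{thm:global_existence} this set has full measure, and $0\in\mathcal T$ since $P^mu_0\to u_0$ and $Q^m\rho_0\to\rho_0$. For $s\in\mathcal T$ the right-hand sides at time $s$ converge.

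The remaining terms pass to the limit as follows.
\begin{itemize}
\item The buoyancy term converges by strong $L^2_{t,x}$ convergence of both factors.
\item At time $t$, $u^m(t)\rightharpoonup u(t)$ weakly in $L^2$ for every $t$. This follows from the uniform $\mathcal V'$ time-derivative bounds together with weak continuity (equicontinuity in $\mathcal V'$ plus the uniform $\mathcal H$ bound). Weak lower semicontinuity then handles $\|u(t)\|^2$ and $\|\rho(t)\|^2$.
\item Weak lower semicontinuity in $L^2((s,t)\times\Omega)$ handles $\int\|\nabla u\|^2$, $\int\|S(u)\|^2$ and $\int\|\nabla\rho\|^2$.
\end{itemize}
This already settles \eqref{eq:rho_energy_ineq}.

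The main obstacle is the boundary integrals. In \eqref{eq:energy_ineq1}, $\int_s^t\int_{\partial\Omega}(\Lambda_\Omega-\alpha)|u^m_{\tanproj}|^2$ sits on the right-hand side, and in \eqref{eq:energy_ineq2} the $\alpha$-term appears with the wrong sign for lower semicontinuity once moved. Both require strong convergence of traces in $L^2((s,t)\times\partial\Omega)$. I would obtain this exactly as in \eqref{boundest}: the trace interpolation inequality gives
\[
\|u^m-u\|_{L^2(L^2(\partial\Omega))}^2\lesssim\|u^m-u\|_{L^2(H^1)}\|u^m-u\|_{L^2(L^2)}\to0,
\]
using the uniform $L^2(H^1)$ bound and strong $L^2_{t,x}$ convergence. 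Since $\alpha,\Lambda_\Omega\in L^\infty$, the quadratic boundary terms then converge to their limits. Combining these limits in each Galerkin inequality proves the lemma for every $s\in\mathcal T$ and $t\ge s$.
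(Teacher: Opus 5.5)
Your proposal is correct, but it handles the kinetic term at the final time $t$ differently from the paper. You pass to the limit at each fixed $t$ by showing $u^m(t)\rightharpoonup u(t)$ and $\rho^m(t)\rightharpoonup\rho(t)$ weakly in $L^2$ for \emph{every} $t$, and then use weak lower semicontinuity there. This covers all $t\ge s$ in one step, and $\mathcal T$ is simply $\{0\}$ together with the times of strong convergence.

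The paper never uses convergence of $u^m$ at the fixed time $t$. It multiplies the integrated Galerkin inequality by $|\psi_\epsilon(r)|^2$, supported in $(t,t+\epsilon)$, so only the strong $L^2_{t,x}$ convergence is needed for the averaged kinetic term. It then lets $\epsilon\to0$ at Lebesgue points of $r\mapsto\|u(r)\|_{L^2}^2$. Arbitrary $t$ is reached through $t_n\downarrow t$, weak continuity of $u$, and lower semicontinuity. Finally, $\mathcal T$ is assembled as an intersection over horizons $M$.

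Your route is shorter and more direct. The price is the every-time weak convergence step, which you only sketch and should write out. For fixed $\phi\in\mathcal V$, the $L^{4/3}(0,T;\mathcal V')$ bound makes the functions $t\mapsto(u^m(t),\phi)$ uniformly bounded and uniformly H\"older-$\tfrac14$. They converge in $L^2(0,T)$, hence uniformly, to $t\mapsto(u(t),\phi)$ for the $C_w([0,T];\mathcal H)$ representative, since both are continuous and agree a.e. Density of $\mathcal V$ in $\mathcal H$ and the uniform $\mathcal H$ bound then give weak convergence against every $\phi\in\mathcal H$. As a by-product, this gives $u(0)=u_0$, because $u^m(0)=P^mu_0\to u_0$.

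The paper's mollification route avoids that lemma, using only the convergences already listed in Theorem~\ref{thm:global_existence}, in the spirit of \cite[Theorem~4.4]{Kelliher2025}.

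One minor point: in \eqref{eq:energy_ineq2} the $\alpha$-term has the favorable sign once moved to the left, since $\alpha\ge0$. So weak convergence of traces would suffice there. Strong trace convergence is genuinely needed only for the indefinite term $(\Lambda_\Omega-\alpha)|u_{\tanproj}|^2$ in \eqref{eq:energy_ineq1}.
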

\begin{proof}
Testing the Galerkin momentum equation with $u^m$ gives the exact identity
\begin{equation}\label{energy-exact}
\frac12\frac{d}{dt}\|u^m\|_{L^2(\Omega)}^2
+\nu\left(\|\nabla u^m\|_{L^2(\Omega)}^2
+\int_{\partial\Omega}u^m\cdot(\alpha u^m-dn(u^m))\,dS\right)
=\int_\Omega\rho^m e_3\cdot u^m\,dx.
\end{equation}
Bounding only the shape-operator term gives
\begin{equation}\label{energy1}
\frac12\frac{d}{dt}\|u^m\|_{L^2(\Omega)}^2
+\nu\|\nabla u^m\|_{L^2(\Omega)}^2
\le \nu\int_{\partial\Omega}(\Lambda_\Omega-\alpha)|u^m|^2\,dS
+\int_\Omega\rho^m e_3\cdot u^m\,dx.
\end{equation}
On the other hand, Proposition~\ref{prop2.3} applied directly to \eqref{energy-exact} gives
\begin{equation}\label{energy-symmetric-exact}
\frac12\frac{d}{dt}\|u^m\|_{L^2(\Omega)}^2
+2\nu\|S(u^m)\|_{L^2(\Omega)}^2
+\nu\int_{\partial\Omega}\alpha|u^m|^2\,dS
=\int_\Omega\rho^m e_3\cdot u^m\,dx.
\end{equation}
We use the one-sided time-mollification and lower-semicontinuity
argument of \cite[Theorem~4.4]{Kelliher2025} to pass from
\eqref{energy1} to \eqref{eq:energy_ineq1}. Applied to
\eqref{energy-symmetric-exact}, the same argument yields
\eqref{eq:energy_ineq2}. Let $r\ge s$ and integrate in time from $s$ to $r$ in $\tau$ to obtain
    \begin{equation}\label{energy2}
    \begin{split}
        &\frac{1}{2}\|u^m(r,\cdot)\|^2_{L^2(\Omega)}+\nu\int_s^r\|\nabla u^m(\tau,\cdot)\|^2_{L^2(\Omega)}\,d\tau
        \\&
        \le \nu\int_s^r\int_{\partial\Omega}(\Lambda_\Omega-\alpha)|u^m|^2\,dS\,d\tau+\int_s^r\int_\Omega \rho^m e_3\cdot u^m\,dx\,d\tau
        +\frac{1}{2}\|u^m(s,\cdot)\|^2_{L^2(\Omega)}.
    \end{split}     
    \end{equation}
Fix $t>s$ and choose a finite horizon $T>t+1$. First suppose that $t$ is a Lebesgue point of the map $r\mapsto\|u(r)\|_{L^2(\Omega)}^2$, and choose $\psi\in C^\infty_c(\R_+)$ with $\int_{\R_+}|\psi|^2\,dr=1$ and $\operatorname{supp}\psi\subset(0,1)$. Let $0<\epsilon<1$ and set
    \[
    \psi_\epsilon(r):=\frac{1}{\sqrt\epsilon}\psi\left(\frac{r-t}{\epsilon}\right),
    \qquad r\in\R_+.
    \]
    We multiply \eqref{energy2} by $|\psi_\epsilon(r)|^2$. Since $t> s$, the function $\psi_\epsilon$ vanishes for $r\le s$, and hence we may integrate over $\R_+$ in $r$ to obtain
    \begin{align*}
    \begin{split}
        &\frac{1}{2}\int_0^\infty|\psi_\epsilon(r)|^2\|u^m(r,\cdot)\|^2_{L^2(\Omega)}\,dr+\nu\int_0^\infty|\psi_\epsilon(r)|^2\int_s^r\|\nabla u^m(\tau,\cdot)\|^2_{L^2(\Omega)}\,d\tau\,dr\\
        &\le \nu\int_0^\infty|\psi_\epsilon(r)|^2\int_s^r\int_{\partial\Omega}(\Lambda_\Omega-\alpha)|u^m|^2\,dS\,d\tau\,dr\\      &+\int_0^\infty|\psi_\epsilon(r)|^2\int_s^r\int_\Omega \rho^m e_3\cdot u^m\,dx\,d\tau\,dr+\frac{1}{2}\|u^m(s,\cdot)\|^2_{L^2(\Omega)}.
        \end{split}
    \end{align*}
    From Theorem~\ref{thm:global_existence}, we know that $u^m$ converges to $u$ weakly in $L^2(0,T;H^1_{\sigma,\tan}(\Omega))$ and strongly in $L^2(0,T;L^2(\Omega))$. Moreover, by the trace interpolation inequality,
    boundedness of $u^m$ in $L^2(0,T;H^1(\Omega))$ and
    strong convergence of $u^m\to u$ in $L^2(0,T;L^2(\Omega))$, it follows that
    $u^m\to u$ strongly in $L^2(0,T;L^2(\partial\Omega))$, as in \eqref{boundest}.
    Using Fubini's theorem and passing to the
    \(\liminf_{m\to\infty}\), we obtain
     \begin{equation}
     \begin{split}
        &\frac{1}{2}\int_0^\infty|\psi_\epsilon(r)|^2\|u(r,\cdot)\|^2_{L^2(\Omega)}\,dr+\nu\int_0^\infty|\psi_\epsilon(r)|^2\int_s^r\|\nabla u(\tau,\cdot)\|^2_{L^2(\Omega)}\,d\tau\,dr
        \\&\le \nu\int_0^\infty|\psi_\epsilon(r)|^2\int_s^r\int_{\partial\Omega}(\Lambda_\Omega-\alpha)|u|^2\,dS\,d\tau\,dr\\      &+\int_0^\infty|\psi_\epsilon(r)|^2\int_s^r\int_\Omega \rho e_3\cdot u\,dx\,d\tau \,dr+\frac{1}{2}\liminf_{m\rightarrow\infty}\|u^m(s,\cdot)\|^2_{L^2(\Omega)}.
    \end{split}
    \end{equation}
    Indeed, \(|\psi_\epsilon(r)|^2\,dr\) has unit mass and is
supported in \((t,t+\epsilon)\), so it is a one-sided approximate identity
at \(t\). The functions
\[
\begin{aligned}
r&\longmapsto\int_s^r\|\nabla u(\tau)\|_{L^2(\Omega)}^2\,d\tau,\\
r&\longmapsto\int_s^r\int_{\partial\Omega}
(\Lambda_\Omega-\alpha)|u|^2\,dS\,d\tau,\\
r&\longmapsto\int_s^r\int_\Omega \rho e_3\cdot u\,dx\,d\tau
\end{aligned}
\]
are absolutely continuous because their integrands belong
to \(L^1(s,T)\). Hence their averages converge to their values at
\(r=t\), while the averaged kinetic term converges to
\(\|u(t)\|_{L^2(\Omega)}^2\) by the Lebesgue-point assumption.
    \begin{equation}
    \begin{split}
         &\frac{1}{2}\|u(t,\cdot)\|^2_{L^2(\Omega)}+\nu\int_s^t\|\nabla u(\tau,\cdot)\|^2_{L^2(\Omega)}\,d\tau
         \\&\le \nu\int_s^t \int_{\partial\Omega}(\Lambda_\Omega-\alpha)|u|^2\,dS\,d\tau+\int_s^t \int_\Omega \rho e_3\cdot u\,dx\,d\tau\\
         &\quad+\frac{1}{2}\liminf_{m\rightarrow \infty}\|u^m(s,\cdot)\|^2_{L^2(\Omega)}.
    \end{split}
    \end{equation}
    The case $t=s$ is tautological. For an arbitrary $t>s$, choose Lebesgue
    points $t_n\downarrow t$. Weak continuity gives
    $u(t_n)\rightharpoonup u(t)$ in $L^2(\Omega)$, so the kinetic-energy term
    is lower semicontinuous, while all time-integral terms converge as
    $t_n\downarrow t$. The inequality therefore holds for every $t\ge s$.
    For almost every $s\in(0,T)$, the pointwise strong convergence gives
    \[
    \liminf_{m\to\infty}\|u^m(s)\|_{L^2(\Omega)}^2
    \le \|u(s)\|_{L^2(\Omega)}^2.
    \]
    Thus the desired energy inequality holds for almost every $s\in(0,T)$ and every $t\in[s,T]$.
    
    Applying the same limiting argument to \eqref{energy-symmetric-exact} gives \eqref{eq:energy_ineq2}. Testing the scalar Galerkin equation with $\rho^m$ gives \eqref{eq:rho_energy_ineq}. The transport term vanishes by incompressibility and $u^m\cdot n=0$.

    For each integer $M\ge1$, let $\mathcal T_M\subset[0,M]$ be the intersection of the three full-measure sets of initial times for which the corresponding inequalities hold for every $t\in[s,M]$. Set
    \[
    \mathcal T
    :=\{0\}\cup\bigcap_{M=1}^{\infty}\bigl(\mathcal T_M\cup(M,\infty)\bigr).
    \]
    Then $\mathcal T$ has full measure in $[0,\infty)$. The union with $(M,\infty)$ makes the condition from $\mathcal T_M$ automatic when $s>M$, so the countable intersection retains full measure on every finite interval. For $s=0$, the same mollifier argument applies, and the initial terms
    pass to the limit because
    \[
    u^m(0)\to u_0 \quad\text{strongly in }L^2(\Omega),
    \qquad
    \rho^m(0)\to \rho_0 \quad\text{strongly in }L^2(\Omega).
    \]
    Hence all three inequalities also hold at $s=0$.
    {If \(s\in\mathcal T\) and \(s>0\), choose an integer \(M\ge t\). Since
    \(s\le M\), membership in
    \(\mathcal T_M\cup(M,\infty)\) implies \(s\in\mathcal T_M\), and the
    inequalities hold at \((s,t)\).} Thus the same
    set $\mathcal T$ is admissible for all three energy inequalities.
\end{proof}

\medskip
We now define the corresponding energy class.
\begin{defn}[Leray--Hopf weak solution]\label{def:leray-hopf}
A weak solution $(u,\rho)$ in the sense of Definition~\ref{def:weak_sol} is called a Leray--Hopf weak solution if there exists a full-measure set $\mathcal T\subset[0,\infty)$, with $0\in\mathcal T$, such that the energy inequalities in Lemma~\ref{lem:energy_inequalities} hold for every $s\in\mathcal T$ and every $t\ge s$.
\end{defn}
Consequently, Theorem~\ref{thm:global_existence} and Lemma~\ref{lem:energy_inequalities} provide a global Leray--Hopf weak solution for every initial datum in $\mathcal H$.

We next state the Korn--Poincar\'e inequality needed for the proof of exponential decay. 

\begin{proposition}[Weighted Korn--Poincar\'e inequality]\label{thm:coupled_korn}
Let $\Omega\subset\mathbb{R}^3$ be a bounded, connected domain with smooth boundary, and let $\alpha\in L^\infty(\partial\Omega)$ satisfy $\alpha\ge0$ almost everywhere. Set $A_{\partial\Omega}:=\{x\in\partial\Omega:\alpha(x)>0\}$ and assume that $\mathcal H^2(A_{\partial\Omega})>0$. Then there exists a constant $C_{(KP,\alpha)}=C_{(KP,\alpha)}(\Omega,\alpha)>0$ such that 
\begin{equation}\label{kornpoincare2}
\|u\|^2_{L^2(\Omega)}\le C_{(KP,\alpha)}\left(\|S(u)\|_{L^2(\Omega)}^2+\int_{\partial\Omega}\alpha|u|^2\,dS\right)
\end{equation}
for every $u\in H^1_{\sigma,\tan}(\Omega)$.
In particular, taking $\alpha\equiv 1$, there exists $C_{KP}=C_{KP}(\Omega)>0$ such that
\begin{equation}\label{kornpoincare1}
\|u\|^2_{L^2(\Omega)}\le C_{KP}\left(\|S(u)\|_{L^2(\Omega)}^2+\int_{\partial\Omega}|u|^2\,dS\right).
\end{equation}
\end{proposition}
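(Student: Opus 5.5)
I plan to argue by contradiction and compactness, using the classical Korn inequality modulo rigid motions. Suppose \eqref{kornpoincare2} fails. Then there are $u_k\in H^1_{\sigma,\tan}(\Omega)$ with $\|u_k\|_{L^2(\Omega)}=1$ and
\[
\|S(u_k)\|_{L^2(\Omega)}^2+\int_{\partial\Omega}\alpha|u_k|^2\,dS\le \frac1k .
\]

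The first step is a uniform $H^1$ bound. I will use the second Korn inequality on the bounded smooth (Lipschitz suffices) domain $\Omega$:
\[
\|\nabla v\|_{L^2(\Omega)}\le C\bigl(\|S(v)\|_{L^2(\Omega)}+\|v\|_{L^2(\Omega)}\bigr)\qquad\text{for } v\in H^1(\Omega;\mathbb R^3).
\]
This gives $\sup_k\|u_k\|_{H^1}<\infty$. By Rellich and weak compactness, after passing to a subsequence, $u_k\rightharpoonup u$ weakly in $H^1$ and $u_k\to u$ strongly in $L^2(\Omega)$. The compact trace embedding $H^1(\Omega)\hookrightarrow L^2(\partial\Omega)$, or equivalently the trace interpolation inequality used in \eqref{boundest}, gives $u_k\to u$ in $L^2(\partial\Omega)$.

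The limit then has the following properties.
\begin{itemize}
\item $\|u\|_{L^2}=1$.
\item $u\in H^1_{\sigma,\tan}(\Omega)$, since this is a weakly closed subspace.
\item $S(u)=0$, by weak lower semicontinuity.
\item $\int_{\partial\Omega}\alpha|u|^2\,dS=0$, since $\alpha\in L^\infty$ and the traces converge strongly.
\end{itemize}
So $u$ lies in $\kernel S$. Because $\Omega$ is connected, $u(x)=a+b\times x$ for constants $a,b\in\mathbb R^3$. Moreover $u=0$ $\mathcal H^2$-a.e. on $A_{\partial\Omega}$, because $\alpha>0$ there.

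The key step, and the only real obstacle, is to show that a rigid motion vanishing on a set $A\subset\partial\Omega$ with $\mathcal H^2(A)>0$ is identically zero. If $b=0$, then $u\equiv a$, and $a=0$ follows because $A$ is nonempty. If $b\ne0$, the zero set of $x\mapsto a+b\times x$ is either empty (when $a\cdot b\neq0$) or a line parallel to $b$ (when $a\cdot b=0$), since $b\times x=-a$ is an affine equation whose solution set is a line. A line has $\mathcal H^2$-measure zero, so it cannot contain $A$ up to a null set. Hence $b=0$, then $a=0$, so $u\equiv0$, which contradicts $\|u\|_{L^2}=1$. The constant obtained depends on $\Omega$ and $\alpha$, as the statement allows.

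I need to check one detail: the trace of $u$ agrees with the pointwise values of the smooth function $a+b\times x$, so the statement "$u=0$ a.e. on $A$" is meaningful. This holds because $u$ is smooth up to the boundary. For \eqref{kornpoincare1}, take $\alpha\equiv1$, so $A_{\partial\Omega}=\partial\Omega$, which has positive surface measure.
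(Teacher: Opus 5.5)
Your proof is correct and is essentially the paper's argument: a normalized contradiction sequence, compactness with strong convergence of traces, a limit in $\kernel S$ vanishing $\mathcal H^2$-a.e.\ on $A_{\partial\Omega}$, and the fact that a nonzero rigid motion $a+b\times x$ vanishes at most on a line. The one difference is the uniform $H^1$ bound: you invoke the classical second Korn inequality, which needs only a Lipschitz boundary and would give the estimate for every $u\in H^1(\Omega;\mathbb R^3)$, whereas the paper derives $\|\nabla u\|_{L^2}^2\le 4\|S(u)\|_{L^2}^2+C\|u\|_{L^2}^2$ directly from Proposition~\ref{prop2.3} with $\Phi=u$, the bound $|d\boldsymbol n(u)|\le\Lambda_\Omega|u|$, and trace interpolation, which is more elementary but uses $\nabla\cdot u=0$, $u\cdot n=0$, and the smoothness of $\partial\Omega$.
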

\begin{proof}
    By Proposition~\ref{prop2.3}, taking $\Phi=u$, we obtain
    \begin{equation}\label{korn2}
    \int_\Omega|\nabla u|^2\,dx=2\int_\Omega|S(u)|^2\,dx+\int_{\partial\Omega}u\cdot d{n}(u)\,dS.
    \end{equation}
    Since $\|d\boldsymbol n\|_{L^\infty(\partial\Omega)}\le\Lambda_\Omega$,  by H\"older inequality, 
    \[
    \int_{\partial\Omega}u\cdot d\boldsymbol n(u)\,dS
    \le \Lambda_\Omega\|u\|_{L^2(\partial\Omega)}^2.
    \]
    By the trace interpolation inequality, for every $\delta>0$,
    \[
    \|u\|_{L^2(\partial\Omega)}^2
    \le \delta\|\nabla u\|_{L^2(\Omega)}^2+C_\delta\|u\|_{L^2(\Omega)}^2.
    \]
    Combining this with \eqref{korn2} and choosing $\delta$ sufficiently small gives
    \begin{equation}\label{inequalitynablatoS}
    \|\nabla u\|^2_{L^2(\Omega)}\le 4\|S(u)\|^2_{L^2(\Omega)}+C_{\Omega,\Lambda_\Omega}\|u\|^2_{L^2(\Omega)}.
    \end{equation}
Estimate \eqref{inequalitynablatoS} also appears in
\cite[Proposition~5.1, Step~1]{Kelliher2025}.
    
    Now, we prove \eqref{kornpoincare2} by contradiction: suppose that no such constant exists. Then, for any $N\in\mathbb{N}$, there is $u_N\in H^1_{\sigma,\tan}(\Omega)$ such that 
    \begin{equation}
    \|u_N\|^2_{L^2(\Omega)}>N\left(\|S(u_N)\|_{L^2(\Omega)}^2+\int_{\partial\Omega}\alpha|u_N|^2\,dS\right).
    \end{equation}
    Without loss of generality, we assume $\|u_N\|_{L^2(\Omega)}=1$ for every $N$.
    Therefore,
    \begin{equation}\label{korn1}
    \frac{1}{N}>\|S(u_N)\|_{L^2(\Omega)}^2+\int_{\partial\Omega}\alpha|u_N|^2\,dS.
    \end{equation}
    Combining with \eqref{inequalitynablatoS}, we obtain the uniform bound of $\|\nabla u_N\|^2_{L^2(\Omega)}$.
    Hence, after passing to a subsequence, $u_N\rightharpoonup u'$ weakly in $H^1(\Omega)$ and $u_N\to u'$ strongly in $L^2(\Omega)$ by the Rellich--Kondrachov theorem.
    {The divergence operator and the normal trace are continuous
    linear maps on \(H^1(\Omega;\mathbb R^3)\), therefore
    \(H^1_{\sigma,\tan}(\Omega)\) is weakly closed in \(H^1(\Omega;\mathbb
    R^3)\), and \(u'\in H^1_{\sigma,\tan}(\Omega)\).}
    We also get $\|u'\|_{L^2(\Omega)}=1$. By compactness of the trace map, we also have $u_N\rightarrow u'$ strongly in $L^2(\partial\Omega)$. Moreover, since $S$ is a bounded linear operator, $S(u_N)\rightharpoonup S(u')$ weakly in $L^2(\Omega)$. On the other hand, \eqref{korn1} gives $\|S(u_N)\|_{L^2(\Omega)}^2+\int_{\partial\Omega}\alpha|u_N|^2\,dS\to0$. Hence $S(u')=0$, and strong convergence of the traces, together with $\alpha\in L^\infty(\partial\Omega)$, yields $\int_{\partial\Omega}\alpha|u'|^2\,dS=0$. Since $\alpha\ge0$, it follows that $u'=0$ almost everywhere on $A_{\partial\Omega}$.
    
    We now use the standard characterization that every $w\in \kernel S$ has the form $w=a+b\times x$ for some fixed $a,b\in\mathbb{R}^3$,  by applying Section~7 in \cite{Kelliher2025}. If $b=0$, then $w=0$ on a set of positive surface measure implies $a=0$. If $b\neq 0$, the zero set of $a+b\times x$ is either empty or an affine line parallel to $b$, and therefore has zero two-dimensional surface measure. Since $u'=0$ a.e.\ on $A_{\partial\Omega}$, which has positive surface measure, we conclude that $a=b=0$. Hence $u'=0$ in $\Omega$, contradicting $\|u'\|_{L^2(\Omega)}=1$. Taking $\alpha\equiv1$ gives the unweighted estimate.
\end{proof}
\begin{remark}
For comparison, \cite[Proposition~5.1]{Kelliher2025} proves that, for $u\in (\kernel S)^\perp:=\{u\in H^1_{\sigma,\tan}(\Omega):\int_\Omega u\cdot v\,dx=0$ for all $v\in\kernel S\}$,
\begin{equation}\label{kornpoincare3}
\|u\|^2_{L^2(\Omega)}\le C_\perp\|S(u)\|^2_{L^2(\Omega)}.
\end{equation}
Writing $u=w+u_\perp$, where $w=\Proj_{\kernel S}u$ and
$u_\perp\in(\kernel S)^\perp$, we have
$\|u\|_{L^2}^2=\|w\|_{L^2}^2+\|u_\perp\|_{L^2}^2$ and
$S(u)=S(u_\perp)$. Thus \eqref{kornpoincare3} controls the component
orthogonal to the entire kernel. Proposition~\ref{thm:coupled_korn} additionally
controls the kernel component through the weighted boundary term whenever
$\mathcal H^2(A_{\partial\Omega})>0$. When $\alpha\equiv0$ almost everywhere,
that boundary control disappears.
\end{remark}

\section{Exponential decay when $\alpha$ is positive on a set of positive surface measure}\label{sec5}

We now prove exponential decay when the nonnegative friction coefficient is positive on a boundary set of positive surface measure.

\begin{theorem}[Exponential decay] \label{thm:main}
Let $(u,\rho)$ be a global Leray--Hopf weak solution to the Boussinesq system \eqref{eq:bouss_momentum}--\eqref{eq:navier_bc} with initial data $(u_0,\rho_0)\in\mathcal H$. Set $A_{\partial\Omega}:=\{x\in\partial\Omega:\alpha(x)>0\}$ and assume that $\mathcal H^2(A_{\partial\Omega})>0$. Then there exist constants $K,D>0$, depending only on $\nu$, $\kappa$, $\Omega$, and $\alpha$, such that 
\[
\| {u}(t)\|_{L^2}^2+\|\rho(t)\|_{L^2}^2\leq D\left(\|{u}_0\|_{L^2}^2+\|\rho_0 \|_{L^2}^2\right) e^{-Kt}.
\]
\end{theorem}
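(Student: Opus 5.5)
The plan is to decouple the system at the level of energy. First I treat the scalar, then feed its decay into the velocity estimate as a forcing term.

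\textbf{Scalar decay.} For $\rho$, inequality \eqref{eq:rho_energy_ineq} holds for every $s\in\mathcal T$ and $t\ge s$. Combined with the Dirichlet Poincar\'e inequality $\|\rho\|_{L^2}^2\le C_P\|\nabla\rho\|_{L^2}^2$, it gives
\[
y(t)+2\kappa C_P^{-1}\int_s^t y\,d\tau\le y(s),\qquad y(t):=\|\rho(t)\|_{L^2}^2 .
\]
The two-time Gronwall inequality of Appendix~\ref{appx1}, applied with zero forcing, then yields $\|\rho(t)\|_{L^2}^2\le \|\rho_0\|_{L^2}^2e^{-2\kappa t/C_P}$. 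If that appendix requires the inequality only for a.e.\ $s$ with $0\in\mathcal T$, this is exactly what Lemma~\ref{lem:energy_inequalities} supplies. Monotonicity of $y$ on $\mathcal T$ also follows directly.

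\textbf{Velocity inequality.} I would use \eqref{eq:energy_ineq2}, not \eqref{eq:energy_ineq1}, because it carries the good-signed friction term. Proposition~\ref{thm:coupled_korn} applies since $\mathcal H^2(A_{\partial\Omega})>0$, and gives
\[
2\nu\|S(u)\|_{L^2}^2+\nu\int_{\partial\Omega}\alpha|u|^2\,dS\ \ge\ \nu\,C_{(KP,\alpha)}^{-1}\|u\|_{L^2}^2=:2\gamma\|u\|_{L^2}^2 .
\]
Here I use that $u$ is tangential on the boundary, so $|u_{\tan}|=|u|$ there. The buoyancy term is handled by Young's inequality:
\[
\Bigl|\int_\Omega\rho e_3\cdot u\,dx\Bigr|\le \frac{\gamma}{2}\|u\|^2_{L^2}+\frac{1}{2\gamma}\|\rho\|^2_{L^2}.
\]
With $x(t):=\|u(t)\|_{L^2}^2$ this gives, for $s\in\mathcal T$ and $t\ge s$,
\[
x(t)+\gamma\int_s^t x\,d\tau\le x(s)+\frac{1}{\gamma}\int_s^t\|\rho_0\|_{L^2}^2e^{-2\kappa\tau/C_P}\,d\tau .
\]
This is a two-time integral inequality with an exponentially decaying forcing term, exactly the setting of Appendix~\ref{appx1}.

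\textbf{Conclusion.} Applying that Gronwall-type lemma gives
\[
x(t)\le \Bigl(x(0)+C\|\rho_0\|^2_{L^2}\Bigr)e^{-\min\{\gamma,2\kappa/C_P\}t/2}.
\]
The factor $1/2$ in the rate, or a $t$-dependent prefactor absorbed by lowering the rate when $\gamma=2\kappa/C_P$, avoids the resonant case. Adding the scalar bound yields the claim with $K$ slightly below $\min\{\gamma,2\kappa/C_P\}$ and $D$ depending only on $\nu,\kappa,\Omega,\alpha$.

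\textbf{Main obstacle.} The main difficulty is that the inequalities hold only in two-time integral form on a full-measure set of initial times, so classical differential Gronwall cannot be used. I would rely on the appendix lemma here. If its exact hypotheses differ, the fallback is a direct discrete argument. On intervals $[s,s+h]$ with $s\in\mathcal T$, monotonicity up to forcing lets me bound $\int_s^{s+h}x$ below by $h\,x(s+h)$ minus forcing. Iterating $x(s+h)\le (1+\gamma h)^{-1}(x(s)+F_s)$ then produces geometric decay. Weak continuity of $u$ handles times outside $\mathcal T$.
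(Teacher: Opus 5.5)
Your proposal is correct and follows essentially the paper's proof: scalar decay from \eqref{eq:rho_energy_ineq}, the Dirichlet Poincar\'e inequality and Corollary~\ref{Acoro}, then \eqref{eq:energy_ineq2} with the weighted Korn--Poincar\'e inequality and Young's inequality, fed into Proposition~\ref{AppendixA} (valid here with $q(s)=\|\rho_0\|_{L^2}^2e^{-2\kappa s/C_P}$, $C'=1/\gamma$, $K=\gamma$). The differences are cosmetic: the paper takes $q(s)=\|\rho(s)\|_{L^2}^2$, and it avoids the resonance $K=D'$ through its choice of $\epsilon$, whereas you lower one of the rates, which is legitimate because $x\ge0$.
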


\begin{proof}
Let $\mathcal T$ be the full-measure set in the Leray--Hopf energy inequalities. The Poincar\'e inequality gives a constant $C_P=C_P(\Omega)>0$ such that, for almost every time,
\begin{equation}
    \begin{split}
        \|\rho\|^2_{L^2(\Omega)}\le C_P\|\nabla\rho\|^2_{L^2(\Omega)}.
    \end{split}
\end{equation} 
Applying it to \eqref{eq:rho_energy_ineq}, we obtain 
\begin{equation}
   \begin{split}
      \|\rho(t)\|^2_{L^2}+\frac{2\kappa}{C_P}\int_s^t\|\rho(\tau)\|^2_{L^2}d\tau\le \|\rho(s)\|^2_{L^2}
   \end{split}
\end{equation}
for every $s\in\mathcal T$ and every $t\ge s$. 
Applying Corollary~\ref{Acoro} on this same set $\mathcal T$, we obtain
\begin{equation}\label{rho1}
    \|\rho(t)\|^2_{L^2(\Omega)}\le\|\rho(s)\|^2_{L^2(\Omega)}e^{-\frac{2\kappa}{C_P}(t-s)}.
\end{equation}
In particular, taking $s=0$ gives
\begin{equation}\label{rho2}
    \|\rho(t)\|^2_{L^2(\Omega)}\le\|\rho(0)\|^2_{L^2(\Omega)}e^{-\frac{2\kappa}{C_P}t}.
\end{equation}

We next estimate the velocity. Applying \eqref{eq:energy_ineq2} for every $s\in\mathcal T$ and every $t\ge s$, \eqref{kornpoincare2}, Young's inequality, and \eqref{rho1}, we obtain 
\begin{equation}\label{energyu2}
\begin{split}
\|u(t)\|^2_{L^2(\Omega)}+&(\frac{2\nu}{C_{(KP,\alpha)}}-\epsilon)\int_s^t\|u(\tau)\|^2_{L^2(\Omega)}\,d\tau 
\\&\le \frac{1}{\epsilon}\int_s^t\|\rho(\tau)\|^2_{L^2{(\Omega)}}\,d\tau+\|u(s)\|^2_{L^2(\Omega)}\\& \le\frac{1}{\epsilon}\|\rho(s)\|^2_{L^2(\Omega)}\int_s^te^{-\frac{2\kappa}{C_P}(\tau-s)}\,d\tau+\|u(s)\|^2_{L^2(\Omega)}.
\end{split}
\end{equation}
Choose $\epsilon>0$ such that $\frac{2\nu}{C_{(KP,\alpha)}}-\epsilon>0$ and $\frac{2\nu}{C_{(KP,\alpha)}}-\epsilon\neq \frac{2\kappa}{C_P}$. Equations~\eqref{rho1} and \eqref{energyu2} verify, on the common set $\mathcal T$, the two hypotheses of Proposition~\ref{AppendixA}.  Applying that proposition yields
\begin{equation}\label{u0}
\begin{split}
    &\|u(t)\|^2_{L^2(\Omega)}\le e^{-(\frac{2\nu}{C_{(KP,\alpha)}}-\epsilon)(t-s)}\|u(s)\|^2_{L^2(\Omega)}\\
    +&\frac{1}{\epsilon}\|\rho(s)\|^2_{L^2(\Omega)}\big[\frac{e^{-\frac{2\kappa}{C_P}(t-s)}-e^{-(\frac{2\nu}{C_{(KP,\alpha)}}-\epsilon)(t-s)}}{\frac{2\nu}{C_{(KP,\alpha)}}-\epsilon-\frac{2\kappa}{C_P}}\big],
\end{split}
\end{equation}
and
\begin{equation}\label{u1}
    \|u(t)\|^2_{L^2(\Omega)}\le e^{-(\frac{2\nu}{C_{(KP,\alpha)}}-\epsilon)t}\|u(0)\|^2_{L^2(\Omega)}+\frac{1}{\epsilon}\|\rho(0)\|^2_{L^2(\Omega)}\big[\frac{e^{-\frac{2\kappa}{C_P}t}-e^{-(\frac{2\nu}{C_{(KP,\alpha)}}-\epsilon)t}}{\frac{2\nu}{C_{(KP,\alpha)}}-\epsilon-\frac{2\kappa}{C_P}}\big]
\end{equation}
when $s=0$.
Set
\[
K:=\min\left\{\frac{2\nu}{C_{(KP,\alpha)}}-\epsilon,\frac{2\kappa}{C_P}\right\},
\qquad
D:=1+\frac{1}{\epsilon\left|\frac{2\nu}{C_{(KP,\alpha)}}-\epsilon-\frac{2\kappa}{C_P}\right|}.
\]
Combining \eqref{rho2} and \eqref{u1}, we obtain
\[
\|u(t)\|_{L^2(\Omega)}^2+\|\rho(t)\|_{L^2(\Omega)}^2
\le
D\bigl(\|u_0\|_{L^2(\Omega)}^2+\|\rho_0\|_{L^2(\Omega)}^2\bigr)e^{-Kt}.
\]
This proves the result.
\end{proof}

For the following corollary, a Leray--Hopf weak solution of the Navier--Stokes equations with Navier boundary conditions means a velocity field $u$ such that $(u,0)$ is a Leray--Hopf weak solution in the sense of Definition~\ref{def:leray-hopf}.

\begin{corollary}[Navier--Stokes decay under localized positive friction]\label{cor:NS-decay}
Let $\Omega\subset\mathbb R^3$ be bounded and connected with smooth boundary, and let $\alpha\in L^\infty(\partial\Omega)$ satisfy $\alpha\ge0$ almost everywhere and
\[
\mathcal H^2\bigl(\{x\in\partial\Omega:\alpha(x)>0\}\bigr)>0.
\]
Let $u$ be a Leray--Hopf weak solution of the three-dimensional incompressible Navier--Stokes equations with Navier boundary conditions and initial datum $u_0\in L^2_{\sigma,\tan}(\Omega)$. Then
\[
\|u(t)\|_{L^2(\Omega)}^2
\le \|u_0\|_{L^2(\Omega)}^2
\exp\left(-\frac{2\nu}{C_{(KP,\alpha)}}t\right),
\qquad t\ge0.
\]
\end{corollary}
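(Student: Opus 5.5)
The plan is to specialize the proof of Theorem~\ref{thm:main} to $\rho\equiv0$. In that case the forcing term vanishes identically, so no Young splitting with a parameter $\epsilon$ is needed and the full rate $2\nu/C_{(KP,\alpha)}$ survives.

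By definition, $(u,0)$ is a Leray--Hopf weak solution in the sense of Definition~\ref{def:leray-hopf}. Hence there is a full-measure set $\mathcal T\subset[0,\infty)$ with $0\in\mathcal T$ on which \eqref{eq:energy_ineq2} holds with $\rho=0$. For every $s\in\mathcal T$ and every $t\ge s$ this reads
\[
\frac12\|u(t)\|_{L^2}^2+2\nu\int_s^t\|S(u)\|_{L^2}^2\,d\tau+\nu\int_s^t\int_{\partial\Omega}\alpha|u_{\tanproj}|^2\,dS\,d\tau\le\frac12\|u(s)\|_{L^2}^2 .
\]
Since $u\cdot n=0$ on $\partial\Omega$, we have $|u_{\tanproj}|=|u|$ there. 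Because $u\in L^2_{\mathrm{loc}}([0,\infty);H^1_{\sigma,\tan}(\Omega))$, the weighted Korn--Poincar\'e inequality \eqref{kornpoincare2} of Proposition~\ref{thm:coupled_korn} applies for almost every $\tau$. It gives
\[
2\nu\|S(u)\|_{L^2}^2+\nu\int_{\partial\Omega}\alpha|u|^2\,dS\ \ge\ \nu\bigl(\|S(u)\|_{L^2}^2+\textstyle\int_{\partial\Omega}\alpha|u|^2\,dS\bigr)\ \ge\ \frac{\nu}{C_{(KP,\alpha)}}\|u\|_{L^2}^2 .
\]
Multiplying the energy inequality by $2$ then yields
\[
\|u(t)\|_{L^2}^2+\frac{2\nu}{C_{(KP,\alpha)}}\int_s^t\|u(\tau)\|_{L^2}^2\,d\tau\le\|u(s)\|_{L^2}^2\qquad(s\in\mathcal T,\ t\ge s).
\]

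Next I apply the integral Gronwall result, Corollary~\ref{Acoro}. This is the same tool used to obtain \eqref{rho1} from the $\rho$-energy inequality, and it requires the inequality only for $s$ in a full-measure set containing $0$ and for all $t\ge s$. It gives $\|u(t)\|_{L^2}^2\le\|u(s)\|_{L^2}^2e^{-\frac{2\nu}{C_{(KP,\alpha)}}(t-s)}$, and taking $s=0$ with $u(0)=u_0$ gives the claim for every $t\ge0$.

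The step that needs care is that the energy inequality is only available on the full-measure set $\mathcal T$ of initial times, not on all of $[0,\infty)$. Because of this, an ordinary differential Gronwall argument is not directly justified, and the argument must go through Corollary~\ref{Acoro}. This is exactly the role it plays in the proof of Theorem~\ref{thm:main}, so no new difficulty arises.

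The only other point to check is that the Korn--Poincar\'e constant appears with the factor claimed. This uses $2\nu\|S(u)\|^2\ge\nu\|S(u)\|^2$, which is harmless. The inequality also must be applied pointwise in time before integrating, which is legitimate because $\|u(\tau)\|_{L^2}^2$ and the dissipation terms are integrable in $\tau$.
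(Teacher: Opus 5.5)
Your proposal is correct and follows the paper's proof. You specialize the second energy inequality \eqref{eq:energy_ineq2} to $\rho\equiv0$ and use the weighted Korn--Poincar\'e inequality \eqref{kornpoincare2} to obtain the integrated inequality with rate $2\nu/C_{(KP,\alpha)}$ on the full-measure set $\mathcal T$, then conclude via Corollary~\ref{Acoro} with $s=0$ (your constant bookkeeping checks out).
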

\begin{proof}
By Definition~\ref{def:leray-hopf}, there exists a full-measure set $\mathcal T\subset[0,\infty)$ with $0\in\mathcal T$ such that the second velocity energy inequality with $\rho\equiv0$, together with \eqref{kornpoincare2}, gives
\[
\|u(t)\|_{L^2}^2+\frac{2\nu}{C_{(KP,\alpha)}}\int_s^t\|u(\tau)\|_{L^2}^2\,d\tau
\le \|u(s)\|_{L^2}^2
\]
for every $s\in\mathcal T$ and every $t\ge s$. Corollary~\ref{Acoro} gives the conclusion.
\end{proof}

\section{Exponential decay when $\alpha\equiv 0$}\label{sec6}
In this section, we address the decay of solutions when the friction coefficient $\alpha\equiv 0$ a.e.\ on the boundary. Let $\Proj_{\kernel S}$ denote the $L^2(\Omega)$-orthogonal projection onto $\kernel S$.

\begin{theorem}[Exponential decay for $\alpha\equiv0$]
\label{thm:decay_alpha_zero}
Let $(u,\rho)$ be a Leray--Hopf weak solution of \eqref{eq:bouss_momentum}--\eqref{eq:navier_bc} with $\alpha\equiv0$ almost everywhere on $\partial\Omega$ and initial data $(u_0,\rho_0)\in\mathcal H$. Then there exist constants $K,D>0$, depending only on $\nu$, $\kappa$, and $\Omega$, such that
\[
\|u(t)-\Proj_{\kernel S}u(t)\|_{L^2}^2+\|\rho(t)\|_{L^2}^2
\le D\left(\|u_0-\Proj_{\kernel S}u_0\|_{L^2}^2+\|\rho_0\|_{L^2}^2\right)e^{-Kt}.
\]
In particular, if $\kernel S=\{0\}$, the same estimate holds for the full velocity $u$.
\end{theorem}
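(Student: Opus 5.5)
The plan is to split $u=w+u_\perp$ with $w:=\Proj_{\kernel S}u$ and $u_\perp:=u-w$. I will show that the kernel component evolves by an \emph{exact} balance law. Subtracting it from the symmetric energy inequality \eqref{eq:energy_ineq2} then gives an energy inequality for $u_\perp$ alone, and the proof closes as in Theorem~\ref{thm:main}, using the Korn inequality \eqref{kornpoincare3} on $(\kernel S)^\perp$ in place of \eqref{kornpoincare2}.

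\textbf{Step 1: the scalar.} The scalar estimates \eqref{rho1}--\eqref{rho2} are proved exactly as in Theorem~\ref{thm:main}. They use only \eqref{eq:rho_energy_ineq}, the Poincar\'e inequality and Corollary~\ref{Acoro}, and they do not involve $\alpha$.

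\textbf{Step 2: dynamics of the kernel component.} $\kernel S$ is finite dimensional, consisting of rigid motions $a+b\times x$ tangent to $\partial\Omega$. Fix an $L^2$-orthonormal basis $w_1,\dots,w_k$ of $\kernel S$; these are smooth, so each $w_i$ is an admissible time-independent test field. Take $(v,\phi)=(\eta(t)w_i,0)$ with $\eta\in C_c^1([0,T))$ in \eqref{weaksolution1}. I will check that two terms vanish.
\begin{itemize}
\item The viscous term: with $\alpha=0$, Proposition~\ref{prop2.3} gives $\mathscr B((u,\rho),(w_i,0))=2\nu\int_\Omega S(u):S(w_i)\,dx=0$.
\item The convective term: $\int_\Omega(u\cdot\nabla)u\cdot w_i\,dx=-\int_\Omega(u\otimes u):\nabla w_i\,dx=0$. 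Here I integrate by parts using $\nabla\cdot u=0$ and $u\cdot n=0$, and use that $\nabla w_i$ is antisymmetric (since $S(w_i)=0$) while $u\otimes u$ is symmetric.
\end{itemize}
Hence $\frac{d}{dt}(u,w_i)=(\rho e_3,w_i)$ in the distributional sense on $(0,T)$, with initial value $(u_0,w_i)$.

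Since $\partial_tu\in L^{4/3}(0,T;\mathcal V')$ and $u\in C_w([0,T];\mathcal H)$, the map $t\mapsto(u(t),w_i)$ is absolutely continuous and the identity holds for every $t$. Summing over $i$ gives, for all $s\le t$,
\[
\|w(t)\|_{L^2}^2-\|w(s)\|_{L^2}^2=2\int_s^t\int_\Omega\rho e_3\cdot w\,dx\,d\tau .
\]

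\textbf{Step 3: energy inequality for $u_\perp$.} Subtract this identity from \eqref{eq:energy_ineq2}, which with $\alpha=0$ has no boundary term. Using $\|u\|^2=\|w\|^2+\|u_\perp\|^2$ and $S(u)=S(u_\perp)$, I obtain for $s\in\mathcal T$ and $t\ge s$:
\[
\|u_\perp(t)\|_{L^2}^2+4\nu\int_s^t\|S(u_\perp)\|_{L^2}^2\,d\tau\le\|u_\perp(s)\|_{L^2}^2+2\int_s^t\int_\Omega\rho e_3\cdot u_\perp\,dx\,d\tau .
\]
Since $u_\perp(\tau)\in(\kernel S)^\perp$ for a.e.\ $\tau$, \eqref{kornpoincare3} gives $\|u_\perp\|^2\le C_\perp\|S(u_\perp)\|^2$. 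Young's inequality bounds the forcing by $\epsilon\|u_\perp\|^2+\epsilon^{-1}\|\rho\|^2$, and \eqref{rho1} controls $\int_s^t\|\rho\|^2\,d\tau$.

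This yields the two hypotheses of Proposition~\ref{AppendixA}, with decay rate $4\nu/C_\perp-\epsilon$. I choose $\epsilon$ so that this rate is positive and differs from $2\kappa/C_P$. Proposition~\ref{AppendixA} then gives the bound for $\|u_\perp(t)\|^2$, and adding \eqref{rho2} gives the stated estimate. The constants $K$ and $D$ are defined as in Theorem~\ref{thm:main}, and $K,D$ depend only on $\nu,\kappa,\Omega$. If $\kernel S=\{0\}$ then $u_\perp=u$, which gives the final assertion.

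\textbf{Main obstacle.} The delicate step is Step~2, the rigorous justification of the exact kernel balance law at the weak-solution level. One point is that the convective term integrates by parts legitimately for $u\in L^2(0,T;H^1)$; this holds via the $L^1$ bound on $u\otimes u$ and the $L^4$ bound on $u$. The other is that the balance holds for every $t$, not just almost every $t$. If the time-regularity route is troublesome, I would instead pass to the limit in the Galerkin identity. Note, however, that the projection $P^mw_i$ need not lie in $\kernel S$, so the limit argument through the weak formulation is the cleaner route.
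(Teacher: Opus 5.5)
Your proposal is correct and follows the paper's proof essentially step for step. The shared steps are: decomposition along an $L^2$-orthonormal basis of $\kernel S$; the exact balance law for $(u,w_j)$ from testing with $(\eta w_j,0)$ (viscous term killed by Proposition~\ref{prop2.3}, convective term by symmetry of $u\otimes u$ against the skew-symmetric $\nabla w_j$); subtraction from the doubled inequality \eqref{eq:energy_ineq2}; the Korn inequality \eqref{kornpoincare3}; and Proposition~\ref{AppendixA} with rate $4\nu/C_\perp-\epsilon$. The differences are cosmetic: you take $\eta\in C_c^1([0,T))$, so the initial value $(u_0,w_j)$ enters directly through \eqref{weaksolution1} rather than via weak continuity at $t=0$; and your closing aside is not quite right, since for $\alpha\equiv0$ one has $a_0(W,Z)=2\int_\Omega S(W):S(Z)\,dx$, so $\kernel S$ is the zero eigenspace of $a_0$ and $P^m w_j\in\kernel S$ — though the weak-formulation route is the correct choice anyway, because the theorem concerns arbitrary Leray--Hopf solutions, not only Galerkin limits.
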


\begin{proof}
Let $\mathcal T$ be the full-measure set in the Leray--Hopf energy inequalities, and let $C_P=C_P(\Omega)$ be the Poincar\'e constant for $H^1_0(\Omega)$. Since $S(w)=0$ implies that $w$ is a rigid motion, and since the tangency condition $w\cdot n=0$ restricts such rigid motions to the rotational symmetries of $\Omega$, it follows from \cite[Lemma~7.1, Proposition~7.2, and Corollary~7.3]{Kelliher2025} that $\kernel S$ is finite-dimensional and every element of $\kernel S$ is a smooth rigid motion. Let $N:=\dim\kernel S$, possibly $N=0$. Choose an $L^2$-orthonormal basis $\{w_j\}_{j=1}^N$ of $\kernel S$; when $N=0$, the sums below are empty. For $j=1,\dots,N$, define
\[
a_j(t):=(u(t),w_j)_{L^2(\Omega)}.
\]
Since $u\in C_w([0,\infty);L^2_{\sigma,\tan}(\Omega))$, each $a_j$ is continuous. We write
\[
u(t,x)=w(t,x)+u_\perp(t,x),
\qquad
w(t,x)=\sum_{j=1}^N a_j(t)w_j(x),
\]
where $w\in\kernel S$ and $u_\perp\in(\kernel S)^\perp$.
By letting $\eta\in C^1_c((0,\infty))$ and using the test function $(v,\phi)=(\eta(\tau)w_j,0)$ in the weak formulation, we obtain the following:
\begin{align*}
&-\int_0^\infty a_j(t)\eta'(t)\,dt+\int_0^\infty\eta(t)\mathscr{B}((u,\rho),(w_j,0))\,dt+\int_0^\infty\eta(t)\int_\Omega(u\cdot\nabla)u\cdot w_j\,dx\,dt\\
&=\int_0^\infty\eta(t)\int_\Omega\rho e_3\cdot w_j\,dx\,dt.
\end{align*}
We claim that the two middle terms vanish. Since $\alpha=0$ and $S(w_j)=0$, Proposition~\ref{prop2.3} gives
\[
\mathscr{B}((u,\rho),(w_j,0))=2\nu\int_\Omega S(u):S(w_j)\,dx=0.
\]
By using $\nabla\cdot u=0$ and $u\cdot n=0$ on $\partial\Omega$, we have
\begin{align*}
\int_\Omega[(u\cdot\nabla)u]\cdot w_j\,dx=-\int_\Omega[(u\cdot\nabla)w_j]\cdot u\,dx=-\int_\Omega u_iu_k\partial_i(w_j)_k\,dx.
\end{align*}
Since \(u_i u_k\) is symmetric in the indices \(i,k\), while \(S(w_j)=0\) implies that
\(\partial_i(w_j)_k\) is skew-symmetric in \(i,k\), we have
\[
u_i u_k\,\partial_i(w_j)_k
=
\frac12 u_i u_k
\left(\partial_i(w_j)_k+\partial_k(w_j)_i\right)
=0.
\]
Therefore
\[
\int_\Omega[(u\cdot\nabla)u]\cdot w_j\,dx=0.
\]
Hence, in the sense of distributions in time, 
\begin{equation}\label{aj-ODE}
\frac{d}{d\tau} a_j(\tau)=\int_\Omega \rho(\tau,x)\, e_3\cdot w_j(x)\,dx,\qquad j=1,\dots,N.
\end{equation}
The right-hand side is locally bounded in time by the scalar energy inequality. Hence $a_j$ agrees on $(0,T)$ with an absolutely continuous function. This function has a finite limit as $t\downarrow0$, and the continuity of $a_j(t)=(u(t),w_j)_{L^2}$ at $t=0$ identifies that limit with $a_j(0)$. Therefore
\[
a_j(t)=a_j(0)+\int_0^t\int_\Omega\rho(\tau,x)e_3\cdot w_j(x)\,dx\,d\tau.
\]
The chain rule therefore gives, for every $0\le s\le t$,
\[
a_j(t)^2-a_j(s)^2=2\int_s^t\int_\Omega a_j(\tau)\rho(\tau,x)e_3\cdot w_j(x)\,dx\,d\tau.
\]
Therefore
\begin{align}
\|w(t)\|_{L^2(\Omega)}^2
&=\sum_{j=1}^N a_j(t)^2 \notag\\
&=\sum_{j=1}^N a_j(s)^2
+2\sum_{j=1}^N\int_s^t a_j(\tau)
\int_\Omega \rho(\tau,x)e_3\cdot w_j(x)\,dx\,d\tau \notag\\
&=\|w(s)\|_{L^2(\Omega)}^2
+2\int_s^t\int_\Omega \rho(\tau,x)e_3\cdot w(\tau,x)\,dx\,d\tau.
\label{aj-ODE2}
\end{align}

For every $s\in\mathcal T$ and $t\ge s$, \eqref{eq:energy_ineq2} with $\alpha=0$ gives
\[
\|u(t)\|^2_{L^2(\Omega)}+4\nu\int_s^t\|S(u(\tau))\|^2_{L^2(\Omega)}\,d\tau\le\|u(s)\|^2_{L^2(\Omega)}+2\int_s^t\int_\Omega\rho e_3\cdot u\,dx\,d\tau.
\]
Since $u=u_\perp+w$, and $S(w)=0$, we have
\[
\|S(u(t))\|^2_{L^2(\Omega)}=\|S(u_\perp(t))\|^2_{L^2(\Omega)}
\]
and 
\begin{equation}\label{full-energy-integrated}
\begin{aligned}
&\|u_\perp(t)\|^2_{L^2(\Omega)}
+\|w(t)\|^2_{L^2(\Omega)}
+4\nu\int_s^t \|S(u_\perp(\tau))\|_{L^2(\Omega)}^2\,d\tau  \\
&\qquad\le
\|u_\perp(s)\|^2_{L^2(\Omega)}
+\|w(s)\|^2_{L^2(\Omega)}
+2\int_s^t\int_\Omega \rho(\tau,x)e_3\cdot u_\perp(\tau,x)\,dx\,d\tau \\
&\qquad\quad
+2\int_s^t\int_\Omega \rho(\tau,x)e_3\cdot w(\tau,x)\,dx\,d\tau.
\end{aligned}
\end{equation}
Subtracting the identity \eqref{aj-ODE2} from \eqref{full-energy-integrated}, we obtain 
\begin{equation}\label{uperp-energy-integrated}
\|u_\perp(t)\|^2_{L^2(\Omega)}
+4\nu\int_s^t \|S(u_\perp(\tau))\|^2_{L^2(\Omega)}\,d\tau
\le
\|u_\perp(s)\|^2_{L^2(\Omega)}
+
2\int_s^t\int_\Omega \rho(\tau,x)e_3\cdot u_\perp(\tau,x)\,dx\,d\tau.
\end{equation}
Since $u_\perp\in (\kernel S)^\perp$, by \eqref{kornpoincare3} and Young's inequality, we obtain
\begin{equation}\label{uperp-integrated}
\|u_\perp(t)\|^2_{L^2(\Omega)}
+
\left(\frac{4\nu}{C_\perp}-\epsilon\right)
\int_s^t \|u_\perp(\tau)\|^2_{L^2(\Omega)}\,d\tau
\le
\|u_\perp(s)\|^2_{L^2(\Omega)}
+
\frac{1}{\epsilon}
\int_s^t \|\rho(\tau)\|^2_{L^2(\Omega)}\,d\tau.
\end{equation}
The scalar energy estimate and the Poincar\'e inequality give
\begin{equation*}
    \|\rho(t)\|^2_{L^2(\Omega)}\le\|\rho(s)\|^2_{L^2(\Omega)}e^{-\frac{2\kappa}{C_P}(t-s)}.
\end{equation*}
The scalar estimate and \eqref{uperp-integrated} hold on the same full-measure set $\mathcal T$; hence they verify the hypotheses of Proposition~\ref{AppendixA}. Choose $\epsilon>0$ so that $\frac{4\nu}{C_\perp}-\epsilon>0$ and $\frac{4\nu}{C_\perp}-\epsilon\neq \frac{2\kappa}{C_P}$. Applying the proposition gives
\begin{equation}
\begin{split}
    &\|u_\perp(t)\|^2_{L^2(\Omega)}\le e^{-(\frac{4\nu}{C_\perp}-\epsilon)(t-s)}\|u_\perp(s)\|^2_{L^2(\Omega)}\\
    +&\frac{1}{\epsilon}\|\rho(s)\|^2_{L^2(\Omega)}\big[\frac{e^{-\frac{2\kappa}{C_P}(t-s)}-e^{-(\frac{4\nu}{C_\perp}-\epsilon)(t-s)}}{\frac{4\nu}{C_\perp}-\epsilon-\frac{2\kappa}{C_P}}\big],
\end{split}
\end{equation}
and 
\begin{equation}
    \|u_\perp(t)\|^2_{L^2(\Omega)}\le e^{-(\frac{4\nu}{C_\perp}-\epsilon)t}\|u_\perp(0)\|^2_{L^2(\Omega)}+\frac{1}{\epsilon}\|\rho(0)\|^2_{L^2(\Omega)}\big[\frac{e^{-\frac{2\kappa}{C_P}t}-e^{-(\frac{4\nu}{C_\perp}-\epsilon)t}}{\frac{4\nu}{C_\perp}-\epsilon-\frac{2\kappa}{C_P}}\big]
\end{equation}
when $s=0$.
Therefore there exist constants $D>0$ and $K>0$ such that
\[
\|u_\perp(t)\|_{L^2(\Omega)}^2+\|\rho(t)\|_{L^2(\Omega)}^2\le D(\|u_0-\Proj_{\kernel S}u_0\|^2_{L^2(\Omega)}+\|\rho_0\|^2_{L^2(\Omega)})e^{-Kt},
\qquad t\ge0.
\]
This completes the proof.
\end{proof}

\begin{remark}
    When $\alpha\equiv0$ and $\kernel S\neq\{0\}$, the dissipation does not control the kernel component $w$, because the weighted boundary term vanishes identically. Therefore, the full velocity need not decay to zero.
    More precisely, recall
    \[
        w(t,x)=\sum_{j=1}^N a_j(t)w_j(x),
    \]
    and
    \[
        \frac{d}{dt}a_j(t)
        =
        \int_\Omega \rho(t,x)\, e_3\cdot w_j(x)\,dx,
        \qquad j=1,\dots,N.
    \]
    The scalar decay estimate implies that
    \[
        a_{j,\infty}
        :=
        a_j(0)+
        \int_0^\infty\int_\Omega \rho(\tau,x)\, e_3\cdot w_j(x)\,dx\,d\tau
    \]
    is well defined and that $a_j(t)\to a_{j,\infty}$.
    Moreover,
    \[
    |a_j(t)-a_{j,\infty}|
    \le C_j\int_t^\infty\|\rho(\tau)\|_{L^2}\,d\tau
    \le C_j' e^{-\kappa t/C_P},
    \]
    so the convergence of the kernel component is exponential. The limit $a_{j,\infty}$ need not vanish. For example, if $\rho_0=0$, then $\rho\equiv0$ and $a_{j,\infty}=a_j(0)$ for every $j$; in particular, at least one limiting coefficient is nonzero whenever $\Proj_{\kernel S}u_0\neq0$. This recovers the Navier--Stokes behavior described in
{\cite[Proposition~7.4 and Theorem~7.5]{Kelliher2025}}; the identity for $a_j'$ above shows precisely how buoyancy breaks conservation of the kernel projection. Thus the orthogonal component decays exponentially to zero, while the full velocity converges exponentially to
    \[
        u(t)\to w_\infty
        :=
        \sum_{j=1}^N a_{j,\infty}w_j
    \]
    strongly in $L^2(\Omega)$ as $t\to\infty$.
\end{remark}

\section*{Acknowledgments}
WF was partially supported by the AMS-Simons Research Enhancement Grant for PUI Faculty. WW was partially supported by a Simons Foundation grant (No. 0007730).

\appendix
\section{A Two-Time Gronwall-Type Inequality}\label{appx1}
In this appendix, we prove a two-time Gronwall-type inequality with exponentially decaying forcing, inspired by Appendix A of \cite{Kelliher2025}. We do not impose a sign condition on $y$, because the proof uses a shifted quantity that need not have a fixed sign.

\begin{proposition}\label{AppendixA}
Let {$y$ be a fixed measurable representative of an
element of \(L^1_{\mathrm{loc}}([0,\infty))\)}, let
$q:[0,\infty)\to[0,\infty)$, and let $K,D',C'>0$ with $K\neq D'$.
{All pointwise inequalities below are understood for this
fixed representative.} Assume that there exists a full-measure set $\mathcal T\subset[0,\infty)$ with $0\in\mathcal T$ such that, for every $s\in\mathcal T$ and every $t\ge s$,
\begin{equation}\label{Aq}
q(t)\le q(s)e^{-D'(t-s)}
\end{equation}
and
\begin{equation}\label{A1}
y(t)+K\int_s^t y(\tau)\,d\tau
\le y(s)+C'q(s)\int_s^t e^{-D'(\tau-s)}\,d\tau.
\end{equation}
Then, for every $s\in\mathcal T$ and every $t\ge s$,
\begin{equation}\label{A2}
y(t)\le y(s)e^{-K(t-s)}+\frac{C'q(s)}{K-D'}\left(e^{-D'(t-s)}-e^{-K(t-s)}\right).
\end{equation}
No sign assumption on $y$ is required.
\end{proposition}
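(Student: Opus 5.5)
The plan is to fix $s\in\mathcal T$ once and for all and absorb the forcing term into an explicit exponential profile. The resulting shifted quantity then satisfies an unforced integral inequality, which I will close by an integrating-factor argument applied to its running integral. I expect only hypothesis \eqref{A1} at this single base point $s$ to be needed, with \eqref{Aq} playing no role in this step. If a gap appears, I will fall back on using \eqref{A1} at later base points $s'\in\mathcal T$ and controlling $q(s')$ by \eqref{Aq}.

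\emph{Step 1 (shift).} Set
\[
h(r):=\frac{C'q(s)}{K-D'}e^{-D'(r-s)},\qquad r\ge s.
\]
This is well defined because $K\neq D'$, and it satisfies $h'+Kh=C'q(s)e^{-D'(r-s)}$. Integrating this identity from $s$ to $t$ gives
\[
h(t)-h(s)+K\int_s^t h\,d\tau=C'q(s)\int_s^t e^{-D'(\tau-s)}\,d\tau.
\]
Define $z:=y-h$ on $[s,\infty)$. Then $z$ is locally integrable and has no fixed sign. Subtracting the identity for $h$ from \eqref{A1} yields
\[
z(t)+K\int_s^t z(\tau)\,d\tau\le z(s)\qquad\text{for every }t\ge s .
\]

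\emph{Step 2 (integrating factor on the primitive).} Let $Z(t):=\int_s^t z(\tau)\,d\tau$. Then $Z$ is absolutely continuous on bounded intervals, $Z(s)=0$, and $Z'=z$ almost everywhere. Step 1 therefore reads $Z'(t)+KZ(t)\le z(s)$ for a.e.\ $t\ge s$. Multiplying by $e^{K(t-s)}$ gives
\[
\frac{d}{dt}\bigl(e^{K(t-s)}Z(t)\bigr)\le z(s)e^{K(t-s)}\quad\text{a.e.}
\]
The function $e^{K(t-s)}Z$ is absolutely continuous, so integrating from $s$ to $t$ gives
\[
Z(t)\le \frac{z(s)}{K}\bigl(1-e^{-K(t-s)}\bigr).
\]

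\emph{Step 3 (return to the pointwise value).} I will not differentiate the pointwise inequality. Instead I substitute the bound on $Z(t)$ back into the integral inequality of Step 1, which holds for every $t\ge s$ and for the fixed representative:
\[
z(t)\le z(s)-KZ(t)\le z(s)e^{-K(t-s)}.
\]
Here I multiplied the Step 2 bound by $K>0$. Unwinding $z=y-h$ and using $h(s)=C'q(s)/(K-D')$ gives
\[
y(t)\le y(s)e^{-K(t-s)}+\frac{C'q(s)}{K-D'}\bigl(e^{-D'(t-s)}-e^{-K(t-s)}\bigr),
\]
which is \eqref{A2}.

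No sign of $y$ or $z$ is used anywhere, since both Step 2 and Step 3 are linear manipulations with $K>0$. The main point of care is this passage between the everywhere-valid integral inequality and the almost-everywhere differential one. Differentiating the pointwise inequality would lose information, which is why the Gronwall argument runs on the primitive $Z$, and the pointwise estimate is then recovered from the integral form at every $t\ge s$.
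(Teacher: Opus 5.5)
Your Step 3 has a sign error that breaks the argument. Step 2 correctly gives the \emph{upper} bound $Z(t)\le \frac{z(s)}{K}\bigl(1-e^{-K(t-s)}\bigr)$. But in $z(t)\le z(s)-KZ(t)$ the primitive enters with a minus sign, so this bound only yields $z(s)-KZ(t)\ge z(s)e^{-K(t-s)}$, the opposite of what you wrote. To bound $z(t)$ from above you need a \emph{lower} bound on $Z(t)$. Hypothesis \eqref{A1} at the single base point $s$ cannot supply one, because the single-base-point implication is false. Take $q\equiv0$ (so $h\equiv0$ and $z=y$), $s=0$, and $y(0)=y(1)=1$ with $y=0$ elsewhere. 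Then $\int_0^t y\,d\tau=0$ and $y(t)+K\int_0^t y\,d\tau\le y(0)$ for every $t\ge0$, yet $y(1)=1>e^{-K}=y(0)e^{-K}$. So your expectation that \eqref{Aq} and the later base points play no role is wrong. The full hypothesis fails in this example at every base point $r\in(0,1)$.

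The repair is your stated fallback, and it is the paper's argument. For $r\in\mathcal T\cap[s,t]$, apply \eqref{A1} with base point $r$ and use \eqref{Aq} in the form $q(r)\le q(s)e^{-D'(r-s)}$. This gives $C'q(r)\int_r^t e^{-D'(\tau-r)}\,d\tau\le C'q(s)\int_r^t e^{-D'(\tau-s)}\,d\tau$. Subtracting the identity for $h$ on $[r,t]$ then gives $z(t)+K\int_r^t z\,d\tau\le z(r)$ for a.e.\ $r\in[s,t]$. With $g:=z+KZ$, this says $g(r)\ge g(t)$ for a.e.\ $r\in[s,t]$, while $g(t)\le g(s)=z(s)$. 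Since $Z'+KZ=g$ a.e.\ and $Z(s)=0$,
\[
Z(t)=\int_s^t e^{-K(t-r)}g(r)\,dr\ge \frac{g(t)}{K}\bigl(1-e^{-K(t-s)}\bigr),
\]
which is the missing lower bound. Then $z(t)=g(t)-KZ(t)\le e^{-K(t-s)}g(t)\le e^{-K(t-s)}z(s)$. Your shift $h$ and primitive $Z$ coincide with the paper's $B_se^{-D'(t-s)}$ and $Z_s$. What your plan lacks is the almost-everywhere monotonicity of $g$, which only the hypotheses at all later base points provide.
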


\begin{proof}
Fix $s\in\mathcal T$ and define
\[
B_s:=\frac{C'q(s)}{K-D'},
\qquad
z_s(t):=y(t)-B_se^{-D'(t-s)},
\qquad t\ge s.
\]
The constant $B_s$ may be negative. Let $r\in\mathcal T\cap[s,t]$. Applying \eqref{A1} with initial time $r$ and using \eqref{Aq}, we obtain
\[
\begin{aligned}
y(t)+K\int_r^t y(\tau)\,d\tau
&\le y(r)+C'q(r)\int_r^t e^{-D'(\tau-r)}\,d\tau\\
&\le y(r)+C'q(s)\int_r^t e^{-D'(\tau-s)}\,d\tau.
\end{aligned}
\]
A direct calculation, using
\[
B_s+\frac{C'q(s)-KB_s}{D'}=0,
\]
gives
\begin{equation}\label{Az}
z_s(t)+K\int_r^t z_s(\tau)\,d\tau\le z_s(r).
\end{equation}
Set
\[
Z_s(t):=\int_s^t z_s(\tau)\,d\tau,
\qquad
g_s(t):=z_s(t)+KZ_s(t).
\]
For almost every $r\in[s,t]$, \eqref{Az} gives $g_s(t)\le g_s(r)$, and taking $r=s$ gives $g_s(t)\le g_s(s)=z_s(s)$. Since $Z_s'(t)=z_s(t)$ almost everywhere,
\[
Z_s'(t)+KZ_s(t)=g_s(t),
\qquad Z_s(s)=0,
\]
so
\[
Z_s(t)=\int_s^t e^{-K(t-r)}g_s(r)\,dr.
\]
Because $g_s(r)\ge g_s(t)$ for almost every $r\in[s,t]$,
\[
\begin{aligned}
z_s(t)
&=g_s(t)-KZ_s(t)\\
&\le g_s(t)-Kg_s(t)\int_s^t e^{-K(t-r)}\,dr\\
&=e^{-K(t-s)}g_s(t)
\le e^{-K(t-s)}z_s(s).
\end{aligned}
\]
Returning to $y$ yields \eqref{A2}.
\end{proof}

\begin{corollary}\label{Acoro}
Let {$y$ be a fixed measurable representative of an element
of \(L^1_{\mathrm{loc}}([0,\infty))\)} and let $K>0$.
{All pointwise inequalities below are understood for this
fixed representative.} Assume that there exists a full-measure set $\mathcal T\subset[0,\infty)$ with $0\in\mathcal T$ such that, for every $s\in\mathcal T$ and every $t\ge s$,
\[
y(t)+K\int_s^t y(\tau)\,d\tau\le y(s).
\]
Then
\[
y(t)\le y(s)e^{-K(t-s)}
\]
for every $s\in\mathcal T$ and every $t\ge s$.
\end{corollary}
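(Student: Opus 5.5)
The plan is to deduce Corollary~\ref{Acoro} directly from Proposition~\ref{AppendixA} by a degenerate choice of the forcing data. I would take $q\equiv0$ on $[0,\infty)$. Then hypothesis \eqref{Aq} holds trivially for every $s\in\mathcal T$ and $t\ge s$, since both sides vanish. Next I would choose any $D'>0$ with $D'\neq K$, for instance $D'=K+1$, and any $C'>0$, say $C'=1$. With $q(s)=0$, the forcing term in \eqref{A1} vanishes, so \eqref{A1} reduces exactly to the assumed inequality
\[
y(t)+K\int_s^t y(\tau)\,d\tau\le y(s),
\qquad s\in\mathcal T,\ t\ge s.
\]
The measurability and local integrability requirements on $y$, and the requirement that $\mathcal T$ have full measure with $0\in\mathcal T$, are the same in both statements.

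Applying Proposition~\ref{AppendixA} then yields \eqref{A2} for every $s\in\mathcal T$ and $t\ge s$. Because $q(s)=0$, the second term $\frac{C'q(s)}{K-D'}\bigl(e^{-D'(t-s)}-e^{-K(t-s)}\bigr)$ is identically zero, and \eqref{A2} collapses to $y(t)\le y(s)e^{-K(t-s)}$, which is the claim.

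There is no real obstacle here. The only point to check is that the artificial parameters $D'$ and $C'$ satisfy the positivity and $K\neq D'$ constraints of Proposition~\ref{AppendixA}, and the explicit choice above does this.

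If a self-contained argument were preferred, I would instead repeat the proof of Proposition~\ref{AppendixA} with $B_s=0$, so that $z_s=y$. One sets $g_s(t)=y(t)+K\int_s^t y$, uses $g_s(t)\le g_s(r)$ for a.e.\ $r\in[s,t]$, and solves $Z_s'+KZ_s=g_s$ to get $y(t)\le e^{-K(t-s)}y(s)$. The reduction above avoids redoing this.
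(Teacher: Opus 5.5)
Your proposal is correct and is exactly the paper's argument: the paper also applies Proposition~\ref{AppendixA} with $q\equiv0$, an arbitrary $C'>0$, and any $D'>0$ with $D'\neq K$. With $q\equiv0$, the forcing term drops out of both \eqref{A1} and \eqref{A2}, which gives the claimed inequality.
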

This extends \cite[Proposition~A.1]{Kelliher2025} by removing the nonnegativity assumption on $y$.
\begin{proof}
Choose any $D'>0$ with $D'\neq K$ and any $C'>0$, and apply Proposition~\ref{AppendixA} with $q\equiv0$.
\end{proof}

\bibliographystyle{plain}
\bibliography{ref}

\end{document}